\documentclass[12pt,reqno]{amsart}
\usepackage{amsmath,amsthm,amssymb,amsfonts,amscd}
\usepackage{mathrsfs}
\usepackage{bbm}
\usepackage{bbding}
\usepackage{hyperref}
\usepackage{geometry}\geometry{margin=1in}
\usepackage{color}
\usepackage{xcolor}

\usepackage{picture,epic}
\usepackage{tikz}




\numberwithin{equation}{section}

\setcounter{footnote}{0}

\theoremstyle{plain}
\newtheorem{theorem}{Theorem}[section]
\newtheorem{lemma}[theorem]{Lemma}
\newtheorem{corollary}[theorem]{Corollary}
\newtheorem{proposition}[theorem]{Proposition}

\theoremstyle{definition}
\newtheorem{conjecture}[theorem]{Conjecture}

\theoremstyle{remark}
\newtheorem{remark}[theorem]{Remark}

\renewcommand{\Re}{\operatorname{Re}}
\renewcommand{\Im}{\operatorname{Im}}
\newcommand{\vol}{\operatorname{vol}}

\newcommand{\sym}{\operatorname{sym}}

\newcommand{\GL}{\operatorname{GL}}
\newcommand{\SL}{\operatorname{SL}}

\renewcommand{\mod}{\operatorname{mod}\ }

\newcommand{\dd}{\mathrm{d}}


\makeatletter
\def\@tocline#1#2#3#4#5#6#7{\relax
  \ifnum #1>\c@tocdepth 
  \else
    \par \addpenalty\@secpenalty\addvspace{#2}%
    \begingroup \hyphenpenalty\@M
    \@ifempty{#4}{%
      \@tempdima\csname r@tocindent\number#1\endcsname\relax
    }{%
      \@tempdima#4\relax
    }%
    \parindent\z@ \leftskip#3\relax \advance\leftskip\@tempdima\relax
    \rightskip\@pnumwidth plus4em \parfillskip-\@pnumwidth
    #5\leavevmode\hskip-\@tempdima
      \ifcase #1
       \or\or \hskip 1em \or \hskip 2em \else \hskip 3em \fi%
      #6\nobreak\relax
    \hfill\hbox to\@pnumwidth{\@tocpagenum{#7}}\par
    \nobreak
    \endgroup
  \fi}
\makeatother

\begin{document}

\title
{Joint distribution of Hecke eigenforms}

\author{Bingrong Huang}


\address{Data Science Institute \& State Key Laboratory of Cryptography and Digital Economy Security \\ Shandong University \\ Jinan \\ Shandong 250100 \\China} 

\email{brhuang@sdu.edu.cn}


\date{\today}

\begin{abstract}
  In this paper, we formulate conjectures on the joint distribution of several Hecke eigenforms.
  We prove an asymptotic formula of the joint mass of two Hecke eigenforms
  under the generalized Riemann Hypothesis (GRH) and the generalized Ramanujan conjecture (GRC).
  We also show that a higher decorrelation of two Hecke eigenforms asymptotically vanishes under GRH.
  As a consequence, we prove an asymptotic formula for the first moment of the triple product $L$-functions under GRH and GRC.
\end{abstract}

\keywords{Joint distribution, mass distribution, decorrelation, $L$-functions}

\subjclass[2010]{11F30, 11L07, 11F66}

\thanks{This work was supported by  the National Key R\&D Program of China (No. 2021YFA1000700) and 
the Scientific Research Innovation Capability Support Project for Young Faculty (No. SRICSPYF-ZY2025158).}


\maketitle

\section{Introduction} \label{sec:Intr}

The limiting distribution of values of eigenfunctions is a central problem in analytic number theory and quantum chaos. Let $\mathbb{H}=\{x+iy:x\in \mathbb{R},\ y>0\}$ be the upper half plane and $\Gamma=\SL_2(\mathbb{Z})$ the modular group. Let $k\geq12$ be an even integer.
Let $H_k$ denote a Hecke basis of the space $S_k$ of all holomorphic cusp forms of weight $k$ on the modular surface $\Gamma\backslash \mathbb{H}$.
The Petersson inner product of $f,g\in S_k$ is defined by
\[
  \langle f,g\rangle := \int_{\Gamma\backslash \mathbb{H}} y^k f(z) \overline{g(z)} \dd\mu z,
\]
where 
$\dd\mu z= \dd x\dd y/y^2$ is the hyperbolic measure.
In the large weight limit, we have the following locally Gaussian value distribution conjecture.

\begin{conjecture}\label{conj:rwc}
  Let $\Omega$ be a fixed compact set of $\Gamma\backslash\mathbb{H}$,
  such that the boundary $\partial\Omega$ has hyperbolic measure zero.
  Let $f\in H_k$ be a Hecke eigenform  such that
  \[
    \frac{1}{\vol(\Gamma\backslash \mathbb{H})}\int_{\Gamma\backslash \mathbb{H}}  y^k |f(z)|^2 \dd\mu z = 1.
  \]
  Then for any $a\in \mathbb{N}$, we have
  \[
    \frac{1}{\vol(\Omega)} \int_{\Omega} y^{ak} |f(z)|^{2a}  \dd \mu z = a! + o(1),
  \]
  as $k\rightarrow \infty$.
\end{conjecture}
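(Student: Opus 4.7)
The plan is to reduce the local statement to a global one via spectral decomposition and then to compute the global integral by expanding the (non-Hecke) form $F := f^a$ in the Hecke basis of $S_{ak}$. First I would establish the global version
\[
  \frac{1}{\vol(\Gamma\backslash\mathbb{H})}\int_{\Gamma\backslash\mathbb{H}} y^{ak}|f(z)|^{2a} \, \dd\mu z = a! + o(1).
\]
Since $\langle f^a, f^a\rangle = \int_{\Gamma\backslash\mathbb{H}} y^{ak}|f|^{2a}\,\dd\mu z$, expanding $f^a = \sum_{g\in H_{ak}} c_g\, g$ gives $\|f^a\|^2 = \sum_g |c_g|^2 \langle g,g\rangle$, and each ratio $|\langle f^a, g\rangle|^2/\bigl(\langle f,f\rangle^a \langle g,g\rangle\bigr)$ is, via a higher-rank Ichino--Watson formula, expressible in terms of central values of Rankin--Selberg $L$-functions attached to $f$ (repeated $a$ times) and $g$. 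Summing over $g \in H_{ak}$ with harmonic weights through the Petersson trace formula converts the problem into a first-moment estimate for this family, and the constant $a!$ would emerge combinatorially from the diagonal Wick-type pairing, matching the $2a$-th absolute moment of a standard complex Gaussian.

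For the local version I would spectrally decompose the $\Gamma$-invariant function $y^{ak}|f(z)|^{2a}$,
\[
  y^{ak}|f(z)|^{2a} = \frac{\|f^a\|^2}{\vol(\Gamma\backslash\mathbb{H})} + \sum_j \langle y^{ak}|f|^{2a},u_j\rangle\, u_j(z) + (\text{Eisenstein}),
\]
where $\{u_j\}$ runs over Hecke--Maass cusp forms, and integrate against $\mathbf{1}_\Omega$. The constant contribution yields $a!\,\vol(\Omega)(1+o(1))$ by the previous step, and the remaining Maass and Eisenstein pieces must be shown to contribute $o(\vol(\Omega))$. This is a higher-order QUE-type statement for the mass density $y^{ak}|f|^{2a}$; the inner products $\langle y^{ak}|f|^{2a},u_j\rangle$ are multi-product periods which, after unfolding, again relate to central values of higher-degree $L$-functions, and Cauchy--Schwarz combined with a second-moment bound in $j$ (or enough subconvexity for individual values) should suffice.

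The main obstacle is that the $L$-functions appearing have degree growing with $a$ (roughly $2a$ in the Rankin--Selberg packaging, or $2^a$ if one keeps the full tensor structure), so the first- and second-moment asymptotics required in both steps are well beyond current technology for $a \geq 3$, even under GRH and GRC. The case $a=1$ is the Holowinsky--Soundararajan mass equidistribution theorem, and $a=2$ reduces precisely to the joint mass and higher decorrelation results proved in the present paper; substantially new input --- a moment formula for higher multi-products, or a trace formula directly evaluating $\|f^a\|^2$ --- would be needed to reach $a \geq 3$.
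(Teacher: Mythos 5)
This is Conjecture~\ref{conj:rwc}, which the paper does \emph{not} prove: it is stated as an open problem, and the author explicitly remarks that even the $a=2$ case (the $L^4$-norm of a single Hecke eigenform) ``seems hard to deduce, even if we assume GRH and take $\Omega=\Gamma\backslash\mathbb{H}$.'' The theorems actually proved are different: Theorem~\ref{thm:221} treats $\int y^{k+\ell}|f|^2|g|^2$ for \emph{two orthogonal} Hecke eigenforms, and Theorem~\ref{thm:220} treats $\int y^{2k} f^2\overline{g}^2$ with $f\perp g$. So your closing claim that ``the case $a=2$ reduces precisely to the joint mass and higher decorrelation results proved in the present paper'' is not correct: those results exclude the diagonal case $f=g$, which is exactly what $a=2$ of this conjecture requires, and the best that is known conditionally there is Zenz's upper bound $O(1)$, not an asymptotic.

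Your overall framework (spectral decomposition into constant, Maass cuspidal, and Eisenstein pieces; Watson-type period formulas; moments of $L$-functions) is the same machinery the paper uses for the theorems it does prove, so the route is sound in spirit, and you are right that degree growth in $a$ is the fundamental obstruction. Two steps nonetheless carry genuine gaps beyond what you acknowledge. First, for the global step you invoke a ``higher-rank Ichino--Watson formula'' to express $|\langle f^a,g\rangle|^2$ in terms of central $L$-values; such a formula is available only for triple products ($a=2$), and no explicit period formula of this kind is known for $\langle f^a,g\rangle$ with $a\geq 3$, so this is not merely a moment-estimate difficulty but a missing tool. Second, even at $a=2$ with $f=g$, extracting the constant $2=2!$ from the first moment of $L(1/2,h)L(1/2,\sym^2 f\times h)$ is precisely the hard part: the diagonal Petersson term alone does not yield the asymptotic, and the paper obtains a main term only for $f\neq g$ (Theorem~\ref{thm:moment}), where the degenerate contributions that obstruct the $f=g$ case do not arise. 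Your proposal should be read as a heuristic outline consistent with the paper's philosophy, not as a proof; the statement remains a conjecture.
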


\begin{remark}
  Let $Z\sim \mathcal{CN}(0,1)$ be a standard complex normal distribution. Then for any $a\in \mathbb{N}$, we have
  $\mathop{\mathbb{E}}[|Z|^{2a}]=a!$.
\end{remark}

\begin{remark}
  The compactness assumption of $\Omega$ is necessary. Xia \cite{Xia} proved that
  \[
    \max_{z\in\mathbb{H}} |y^{k/2}f(z)| = k^{1/4+o(1)}.
  \]
  Moreover, Blomer, Khan, and Young \cite{BlomerKhanYoung2013distribution} showed that for $p>6$,
  \[  \int_{\Gamma\backslash \mathbb{H}} |y^{k/2}f(z)|^p \dd\mu z   \gg k^{p/4-3/2-\varepsilon}. \]
\end{remark}

For $a=1$, this is the holomorphic analog of the quantum unique ergodicity (hQUE) conjecture of Rudnick and Sarnak \cite{RS}. hQUE was solved by Holowinsky and Soundararajan \cite{Holowinsky,Soundararajan,HS}, which confirmed the equidistribution of mass of $f$ in the $L^2$ sense.

For $a=2$, this is related to the  $L^4$-norm problem of Hecke eigenforms. When $\Omega=\Gamma\backslash\mathbb{H}$, this is
\cite[Conjecture 1.2]{BlomerKhanYoung2013distribution}.
Blomer, Khan, and Young \cite{BlomerKhanYoung2013distribution}  proved that
\[
  \int_{\Gamma\backslash \mathbb{H}} y^{2k}|f(z)|^{4}  \dd \mu z
  = O( k^{1/3+\varepsilon}),
\]
for any positive $\varepsilon$.
Assuming the generalized Riemann Hypothesis (GRH), Zenz \cite{Zenz} proved
\[
  \int_{\Gamma\backslash \mathbb{H}} y^{2k}|f(z)|^{4}  \dd \mu z = O(1).
\]
It seems hard to deduce Conjecture \ref{conj:rwc} for $a=2$,
even if we assume GRH and take $\Omega=\Gamma\backslash\mathbb{H}$.
Note that for the Maass form case, Buttcane--Khan \cite{ButtcaneKhan} proved an asymptotic formula for the $L^4$-norm under the generalized Lindel\"of Hypothesis (GLH).

\medskip

In this paper, our main interest is the joint  distribution of Hecke eigenforms in the large weight limit.
Let $f\in H_k$ and $g\in H_\ell$ and assume $\langle f,g\rangle=0$ if $k=\ell$.
We will consider the joint mass distribution of $|y^{k/2}f(z)|$ and $|y^{\ell/2}g(z)|$, which extends Conjecture \ref{conj:rwc}.
We  will also consider the joint distribution of $y^{k/2}f(z)$ and $y^{\ell/2}g(z)$ when $k=\ell$, trying to catch the sign changes of both Hecke eigenforms. We view the later one as higher power extensions of the decorrelation of Hecke eigenforms \cite{Constantinescu,Huang}.
One may extend our ideas to consider joint  distribution of several Hecke eigenforms.
As a consequence of our result on the joint mass distribution, we prove an asymptotic formula for a first moment of the triple product $L$-function under GRH and GRC, from which we obtain a nonvanishing result for these $L$-functions.

Kurlberg--Rudnick \cite{KurlbergRudnick} studied value distribution for eigenfunctions of the quantum cat map.
Recently, Hua--Huang--Li \cite{HHL} proved a similar result on the mixed $(2,2)$ moment of two Hecke--Maass cusp forms. 
This paper can be viewed as a continuation of \cite{HHL}. Since the norm problems of holomorphic forms are usually different from Maass forms (e.g. QUE and $L^4$-norm), and we have some other results for the holomorphic setting, we decide to write this paper separately. 

\subsection{Joint mass distribution of Hecke eigenforms}

We first consider the joint mass distribution of several Hecke eigenforms. We formulate the following conjecture, which indicates that orthogonal Hecke eigenforms are statistically independent.

\begin{conjecture}\label{conj:jointmass}
  Let $\Omega$ be a fixed compact set of $\Gamma\backslash\mathbb{H}$,
  such that the boundary $\partial\Omega$ has hyperbolic measure zero.
  Let $J\in \mathbb{Z}_{>1}$.
  Let $f_j\in H_{k_j}$  with $\langle f_j,f_j\rangle = \vol(\Gamma\backslash\mathbb{H})$ for all $1\leq j\leq J$. Assume that $\langle f_i,f_j\rangle=0$ if $k_i=k_j$ and $i\neq j$.  Let $a=(a_j) \in \mathbb{Z}_{\geq1}^J$, then we have
  \[
    \frac{1}{\vol(\Omega)}  \int_{\Omega} y^{\sum_{j=1}^J a_j k_j} \prod_{j=1}^J |f_j(z)|^{2a_j}  \dd \mu z  = \prod_{j=1}^{J} (a_j)!  + o(1)
  \]
  as $\min k_j\rightarrow \infty$.
\end{conjecture}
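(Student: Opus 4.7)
The plan is to separate the argument into (a) computing the total mass over all of $\Gamma\backslash\mathbb{H}$, and (b) showing that the density $y^K|F|^2$ equidistributes with respect to the hyperbolic measure, where
\[
F(z) := \prod_{j=1}^J f_j(z)^{a_j}
\]
is a holomorphic cusp form of weight $K := \sum_j a_j k_j$, so that $y^K\prod_j |f_j|^{2a_j} = y^K|F|^2$. Approximating $\mathbf{1}_\Omega$ above and below by smooth test functions $\psi_\pm$ with $\int(\psi_+-\psi_-)\,\dd\mu = o(1)$ and spectrally decomposing each $\psi$ on $L^2(\Gamma\backslash\mathbb{H})$ gives
\[
\int_{\Gamma\backslash\mathbb{H}} \psi(z)\,y^K |F(z)|^2\,\dd\mu z = \frac{\int\psi\,\dd\mu}{\vol(\Gamma\backslash\mathbb{H})}\,\langle F,F\rangle + \mathcal{R}_{\mathrm{Maass}}(\psi) + \mathcal{R}_{\mathrm{Eis}}(\psi),
\]
where the spectral remainders are sums of projections $\langle y^K|F|^2,\phi\rangle$ against Hecke--Maass cusp forms and Eisenstein series, weighted by the spectral coefficients of $\psi$.

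For the total-mass term I would use Parseval on $S_K$: $\langle F,F\rangle = \sum_{h\in H_K}|\langle F,h\rangle|^2/\langle h,h\rangle$. Each inner product $\langle F,h\rangle$ can be computed by iterating Watson's triple product formula: peel off one factor $f_j$ at a time and expand the remaining partial product in a Hecke basis of the appropriate intermediate weight, descending until only genuine triple products of Hecke eigenforms remain. This rewrites $|\langle F,h\rangle|^2$ as a sum of products of central triple product $L$-values, divided by adjoint $L$-values at $1$. Under GRC and GRH these quantities are controlled, and the dominant contribution comes from the ``diagonal'' terms in which each of the $a_j$ copies of $f_j$ in $F$ is paired with a copy of $\bar f_j$ in $\bar F$. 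The hypothesis $\langle f_i,f_j\rangle=0$ for $k_i=k_j$, $i\neq j$ is essential to kill cross-pairings between distinct forms of the same weight. A Wick-type count of these diagonal pairings yields exactly $\prod_j a_j!$, so that, after evaluating the resulting sums over intermediate Hecke bases via the Petersson trace formula, one should obtain $\langle F,F\rangle = \vol(\Gamma\backslash\mathbb{H})\,\prod_j a_j! \cdot (1+o(1))$.

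For the spectral remainder, expanding $F$ in Hecke bases reduces each projection $\langle y^K|F|^2,\phi\rangle$ to a double sum of triple products $\langle y^K h\,\overline{h'},\phi\rangle$, which by Ichino's formula are governed by central values $L(1/2,h\otimes h' \otimes \phi)$. Under GRH, together with a higher-power extension of the decorrelation input from \cite{Constantinescu,Huang}, I expect each such projection to be $o(\langle F,F\rangle)$, and the full double sum over $(h,h',\phi)$ to remain negligible uniformly in $\psi$; the Eisenstein analogue is handled similarly via the Rankin--Selberg unfolding and bounds for the associated $L$-values on the critical line. Sending $\psi_\pm\to\mathbf{1}_\Omega$ then yields the conjectured asymptotic.

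\textbf{Main obstacle.} The hard part is the combinatorial and arithmetic control of the iterated triple product expansion of $\langle F,h\rangle$: the number of terms grows rapidly with $\sum_j a_j$ and $J$, and showing that off-diagonal pairings are dominated by the diagonal ones requires fine cancellations in sums of central $L$-values going beyond what GRH and GRC alone supply. A secondary obstacle is extending the decorrelation input from pairs of Hecke eigenforms to the composite object $F$, which is not itself an eigenform, so that the many spectral cross-terms introduced by successively expanding $F$ in Hecke bases can all be handled uniformly in the growing weights $k_j$.
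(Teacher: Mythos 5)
This statement is a \emph{conjecture}, and the paper does not give a proof of it. What the paper proves is only the special case $J=2$, $(a_1,a_2)=(1,1)$, $\Omega=\Gamma\backslash\mathbb{H}$ under GRH and GRC (Theorem~\ref{thm:221}). Even there, the route you propose for the total-mass term --- Parseval in $S_K$, $\langle F,F\rangle = \sum_{h\in H_K}|\langle F,h\rangle|^2/\langle h,h\rangle$, followed by Watson's formula --- is precisely the approach the paper says cannot yield an asymptotic: it produces a single sum of non-negative triple-product central $L$-values, which under GRH can be upper-bounded $O(1)$ (as Zenz does for $f=g$) but not evaluated. The paper's stated novelty is to avoid this entirely by spectrally decomposing $y^k|f|^2$ and $y^\ell|g|^2$ separately in $L^2(\Gamma\backslash\mathbb{H})$ into constant function $+$ Maass cusp forms $+$ Eisenstein series. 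The constant function immediately isolates the main term, and the remainder is estimated via Watson's formula together with the Soundararajan-style fractional moment bound of Proposition~\ref{prop:u_j}; the decorrelation between $f\neq g$ is what makes the residual spectrum a genuine error term. In fact the logical order in the paper is the reverse of yours: Theorem~\ref{thm:221} (proved spectrally) is used to deduce the asymptotic for the moment $\sum_{h\in H_{k+\ell}}L(1/2,f\times g\times h)/L(1,\sym^2 h)$ (Theorem~\ref{thm:moment}), not vice versa.

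More seriously, your ``Wick-type count'' giving $\prod_j a_j!$ does not arise from the holomorphic Parseval expansion. The sum $\sum_h|\langle F,h\rangle|^2$ over a Hecke basis of $S_K$ has no intrinsic diagonal/off-diagonal structure in which $a_j$ copies of $f_j$ are paired with $\bar f_j$; that pairing combinatorics belongs to the Gaussian heuristic, not to this spectral identity. Viewed through the Maass-spectral decomposition, the constant function contributes exactly $1$, so for any $a_j\geq 2$ the excess $\prod_j a_j!-1$ must come from the residual cuspidal and Eisenstein spectrum --- and this is precisely the content of the open $L^4$-norm problem (even $J=1$, $a=2$, $\Omega=\Gamma\backslash\mathbb{H}$, GRH assumed), which the paper emphasizes is out of reach. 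Your localization step ($\psi_\pm\to\mathbf{1}_\Omega$) is a reasonable outline but requires strong control of the projections against low-lying Maass forms, again unavailable. So the proposal is not a proof, does not follow the paper's route for the case the paper actually handles, and the ingredient you identify as the ``main obstacle'' is indeed the crux, but the holomorphic Parseval framing makes it worse rather than better: you lose the clean main-term isolation that the Maass decomposition provides.
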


For $J=2$, $(a_1,a_2)=(1,1)$ and $\Omega=\Gamma\backslash \mathbb{H}$, we can prove the above conjecture by assuming the generalized Riemann Hypothesis (GRH) and the generalized Ramanujan conjecture (GRC). Here we will use GRC for $\GL(2)$ Hecke--Maass cusp forms, which asserts that the $n$-th Hecke eigenvalue of a $\GL(2)$ Hecke--Maass cusp form is bounded by the number of divisors of $n$. Our first main result is the following asymptotic formula.

\begin{theorem}\label{thm:221}
  Assume GRH and GRC. Let $f\in H_k$ and $g\in H_\ell$ with $\langle f,f\rangle = \langle g,g\rangle = \vol(\Gamma\backslash\mathbb{H})$. Assume 
  $\langle f,g\rangle=0$ if $k=\ell$.   Then  we have
  \begin{equation}\label{eqn:|f|^2|g|^2}
    \frac{1}{\vol(\Gamma\backslash \mathbb{H})}  \int_{\Gamma\backslash \mathbb{H}} y^{k+\ell} |f(z)|^{2}  |g(z)|^{2} \dd \mu z   = 1 + O((\log (k+\ell))^{-1/4+\varepsilon })
  \end{equation}
  as $\max (k,\ell) \rightarrow \infty$.
  In particular, Conjecture \ref{conj:jointmass} is true when $J=2$, $(a_1,a_2)=(1,1)$ and $\Omega=\Gamma\backslash \mathbb{H}$, under GRH and GRC.
\end{theorem}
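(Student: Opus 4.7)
The natural starting point is Parseval's identity for the spectral decomposition of $L^2(\Gamma\bs\mathbb{H})$. Since $y^k|f(z)|^2$ and $y^\ell|g(z)|^2$ are both $\Gamma$-invariant, expanding one of them spectrally gives
\[
\int_{\Gamma\bs\mathbb{H}} y^{k+\ell}|f(z)|^2|g(z)|^2\,\dd\mu z \,=\, \frac{\langle y^k|f|^2,1\rangle\langle 1,y^\ell|g|^2\rangle}{\vol(\Gamma\bs\mathbb{H})}\,+\,\mathcal{C}\,+\,\mathcal{E},
\]
where $\mathcal{C}=\sum_{u_j}\langle y^k|f|^2,u_j\rangle\langle u_j,y^\ell|g|^2\rangle/\langle u_j,u_j\rangle$ runs over an orthonormal basis of (even) Hecke--Maass cusp forms and $\mathcal{E}$ is the analogous integral over the unitary Eisenstein spectrum. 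The normalisation $\langle f,f\rangle=\langle g,g\rangle=\vol(\Gamma\bs\mathbb{H})$ makes the constant piece equal to $\vol(\Gamma\bs\mathbb{H})$, producing the main term $1$ after division. The task is thus to show that $\mathcal{C}+\mathcal{E}=O((\log(k+\ell))^{-1/4+\varepsilon})\cdot\vol(\Gamma\bs\mathbb{H})$.

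For the cuspidal piece I would invoke Watson's triple product identity: up to explicit archimedean factors,
\[
\frac{|\langle y^k|f|^2, u_j\rangle|^2}{\langle u_j,u_j\rangle}\,=\,\mathcal{A}_\infty(k,t_j)\cdot \frac{L(1/2, u_j)\, L(1/2, u_j\otimes\sym^2 f)}{L(1,\sym^2 f)^2\, L(1,\sym^2 u_j)},
\]
with the analogous identity for $g$. The archimedean factor $\mathcal{A}_\infty(k,t_j)$ decays rapidly once $|t_j|$ exceeds $k^{1/2+\varepsilon}$, effectively truncating the sum. Under GRH and GRC one has polylog-size bounds for each of $L(1/2,u_j)$, $L(1/2, u_j\otimes\sym^2 f)$, $1/L(1,\sym^2 f)$ and $1/L(1,\sym^2 u_j)$. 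The Eisenstein contribution factorises analogously through $L(1/2+it,\sym^2 f)\zeta(1/2+it)$ and its $g$-counterpart, and is controlled directly by the same GRH inputs, contributing much less than the cuspidal piece.

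To extract the claimed $(\log(k+\ell))^{-1/4+\varepsilon}$ saving I would split the cuspidal spectrum dyadically at a parameter $T_0$. On the low range $|t_j|\le T_0$, quantitative hQUE under GRH (via Soundararajan's weak subconvexity applied to $L(1/2, u_j\otimes\sym^2 f)$, inserted back into Watson's formula) yields $\langle y^k|f|^2,u_j\rangle/\vol(\Gamma\bs\mathbb{H})=O((\log k)^{-\alpha})$ for some explicit $\alpha>0$, uniformly in that range, and likewise for $g$; Weyl's law gives $O(T_0^2)$ such forms, so Cauchy--Schwarz contributes $O(T_0^2(\log(k+\ell))^{-2\alpha+\varepsilon})$. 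On the high range $|t_j|>T_0$, I would couple Cauchy--Schwarz with Zenz's conditional bound $\int y^{2k}|f|^4\,\dd\mu z=O(1)$ under GRH (and the analogue for $g$), together with the archimedean decay of $\mathcal{A}_\infty$, to obtain a saving $T_0^{-\beta}$ for some $\beta>0$. Optimising $T_0$ then produces the advertised polylog saving.

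\textbf{Main obstacle.} Cauchy--Schwarz against Zenz's $L^4$ bound alone yields only $O(1)$, matching the size of the main term but giving no $o(1)$ decay. The genuine saving must come from combining uniform quantitative hQUE in the low range $|t_j|\le T_0$ (with explicit dependence on both weights and on $t_j$) with spectral large-sieve or Kuznetsov-type estimates controlling the high range. The central technical task is verifying that the exponents $\alpha$ and $\beta$ produced by these inputs balance cleanly to the advertised $1/4-\varepsilon$, uniformly as $k$ and $\ell$ vary independently and regardless of which of them dominates.
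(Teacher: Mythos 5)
Your opening steps agree with the paper: Parseval against the spectral basis of $L^2(\Gamma\bs\mathbb{H})$ isolates the constant-eigenfunction term as the main term, Watson's formula converts the cuspidal (and Eisenstein) periods into central $L$-values times explicit gamma factors, and the Eisenstein piece is indeed easy under GRH. Where you diverge from the paper — and where your argument breaks down — is in how the cuspidal sum $\mathcal{C}$ is estimated.

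The central gap is the claim that, on the range $|t_j|\le T_0$, quantitative hQUE under GRH gives a \emph{uniform individual} bound of the shape $\langle y^k|f|^2,u_j\rangle/\vol(\Gamma\bs\mathbb{H})=O((\log k)^{-\alpha})$. This is not available and would in fact be false for some $u_j$. Under GRH, Watson's formula plus the Stirling estimate $|\Gamma(k-\tfrac12+it_j)|/\Gamma(k)\ll k^{-1/2}e^{-t_j^2/(2k)}$ give at best
$|\langle y^k|f|^2,u_j\rangle|\ll (kt_j)^{\varepsilon}k^{-1/2}e^{-t_j^2/(4k)}$;
the central values $L(\tfrac12,u_j)$ and $L(\tfrac12,\sym^2 f\times u_j)$ are only $\ll (kt_j)^\varepsilon$, not $O((\log k)^{-\alpha})$, and GRH allows individual $u_j$ for which these values are as large as $\exp\big(c\sqrt{\log k}/\sqrt{\log\log k}\big)$. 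Weyl's law then contributes $\asymp\min(k,\ell)$ terms below the exponential cutoff $t_j\asymp\sqrt{\min(k,\ell)}$, so that the sum of these individual bounds is roughly $\min(k,\ell)/\sqrt{k\ell}\cdot(k\ell)^\varepsilon$, which is only $O((k\ell)^\varepsilon)$ when $k\asymp\ell$ — no saving at all. Your ``high range'' step (Cauchy–Schwarz against Zenz's $L^4$ bound) indeed only returns $O(1)$, as you note, and your proposed remedy in the low range does not repair it.

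What the paper actually does is bound the \emph{family average} rather than individual periods. It reduces $\mathcal{C}$ to the fractional moment
\[
  \frac{1}{\sqrt{k\ell}}\;{\sum_{j}}^{\rm even}\frac{L(\tfrac12,u_j)\,L(\tfrac12,\sym^2 f\times u_j)^{1/2}\,L(\tfrac12,\sym^2 g\times u_j)^{1/2}}{L(1,\sym^2 u_j)}\,e^{-t_j^2/(2k)-t_j^2/(2\ell)},
\]
and then proves Proposition~\ref{prop:u_j} via Soundararajan's moment method (Lemma~\ref{lemma:logL1}, Lemma~\ref{lemma:logL2}, Lemma~\ref{lemma:average}, Lemma~\ref{lemma:A<<}): large values of the $L$-values are rare in the spectral family, and the exponents $l_1=1$, $l_2=l_3=\tfrac12$ produce $(\log T)^{-1/4+\varepsilon}$ after the $e^{\mu(T)+\sigma(T)^2/2}$ bookkeeping. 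The polylog saving is a \emph{statistical} phenomenon coming from the distribution of $\log L$-values across the family, not from any bound valid term-by-term. If you want to salvage your split at $T_0$, you must replace ``uniform individual hQUE'' on the low range by an averaged estimate of this moment type; once you do that, the split at $T_0$ and the Zenz input become unnecessary.

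Two smaller points: the paper needs GRC (for Hecke–Maass forms) as an input, which enters through Lemma~\ref{lemma:PNT1} to control the sums over $p^2$; your sketch never uses GRC. And the archimedean factor is not ``rapid decay once $|t_j|>k^{1/2+\varepsilon}$'' in a qualitative sense only — the precise Gaussian profile $e^{-t_j^2/(2k)}$ from \eqref{eqn:Gamma-small}/\eqref{eqn:Gamma-large} is what fixes the effective family size $T\asymp\sqrt{\min(k,\ell)}$ and hence the parameter appearing in Proposition~\ref{prop:u_j}.
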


Note that we only require $\max (k,\ell) \rightarrow \infty$, which is weaker than the condition in Conjecture \ref{conj:jointmass}.
This may be special for the exponent $2$ of forms and $\Omega=\Gamma\backslash \mathbb{H}$, since we assume the $L^2$-normalization of cusp forms. Also, when $\min(k,\ell)\ll (\max(k,\ell))^{1-\varepsilon}$, we can actually prove power saving error term under GLH (see \S\ref{sec:joint-mass}).

Theorem \ref{thm:221} may be compared with \cite[Theorem 1.1]{Zenz}, where Zenz  considered the case $f=g$.
In this case, one can only prove an upper bound by assuming GRH.
Note that $fg\in S_{k+\ell}$.
It is natural to view the left hand side of \eqref{eqn:|f|^2|g|^2} as $\langle fg,fg \rangle$. By decomposing $fg$ into a Hecke basis of holomorphic forms and Parseval's identity, we have
\begin{equation*}
  \langle fg , fg \rangle
  =\sum_{h\in H_{k+\ell}} |\langle fg , h\rangle|^2.
\end{equation*}
An application of conditional bounds for moments of $L$-functions will show that  $\langle fg , fg \rangle=O(1)$ under GRH.

Our novelty to prove an asymptotic formula for $\langle fg , fg \rangle$ (Theorem \ref{thm:221})
is that we instead use spectral decompositions for $y^k|f|^2$ and $y^\ell |g|^2$ into Hecke--Maass forms.
Such an approach to study the $L^4$-norm problem was mentioned in Blomer--Khan--Young \cite[P. 2615]{BlomerKhanYoung2013distribution}, where as they wrote is ``more difficult''. 
It turns out that in our case this may be a better way. 
We  notice that  $\langle fg , fg \rangle=\langle |F|^2, |G|^2 \rangle$ where $F(z)=y^{k/2}f(z)$ and $G(z)=y^{\ell/2}g(z)$, and then use the decomposition of $L^2(\Gamma\backslash \mathbb{H})$ by the constant function, the Hecke--Maass cusp forms of weight zero, and the Eisenstein series.
Indeed, let $u_0=\vol(\Gamma\backslash\mathbb{H})^{-1/2}$, $\{u_j\}_{j\geq1}$ be an $L^2$-normalized Hecke basis of the space of cuspidal Maass forms, and $E_t(z)=E(z,1/2+it)$ the Eisenstein series, then we have
\[
  \langle |F|^2, |G|^2 \rangle = \langle |F|^2, u_0 \rangle \langle u_0, |G|^2 \rangle + \sum_{j\geq1} \langle |F|^2, u_j \rangle \langle u_j, |G|^2 \rangle
  + \frac{1}{4\pi} \int_{\mathbb{R}} \langle |F|^2, E_t \rangle \langle E_t, |G|^2 \rangle \dd t.
\]
In this way, we see the main term coming from the constant function contribution. To bound the contribution from the Hecke--Maass cusp forms, we will use moments of $L$-functions. Indeed, by Watson's formula and Stirling's formula, under GRH, we have essentially
\begin{multline*}
  \sum_{j\geq1} \langle |F|^2, u_j \rangle \langle u_j, |G|^2 \rangle \\
  \ll  \frac{(\log (k+\ell))^\varepsilon}{\sqrt{k\ell}} \sum_{j\geq1}  \frac{L(1/2,f\times f\times u_j)^{1/2} L(1/2,g\times g\times u_j)^{1/2}}{L(1,\sym^2 u_j)}  e^{-t_j^2/(2k)-t_j^2/(2\ell)} .
\end{multline*}
Using the factorization
\[
  L(1/2,f\times f\times u_j) = L(1/2, u_j)L(1/2,\sym^2 f\times u_j)
\]
and nonnegativity of central $L$-values, we need to consider the following fractional moment of $L$-functions
\[
   \frac{(\log (k+\ell))^\varepsilon}{\sqrt{k\ell}} \sum_{j\geq1}  \frac{L(1/2,u_j) L(1/2,\sym^2 f\times u_j)^{1/2} L(1/2,\sym^2 g\times u_j)^{1/2}}{L(1,\sym^2 u_j)}   e^{-t_j^2/(2k)-t_j^2/(2\ell)} .
\]
We will apply Soundararajan's method \cite{soundararajan2009moments} on moments of $L$-functions under GRH and GRC to deal with the above fractional moments.
Since the proof uses Maass cusp forms, we will need GRC for them.
See also \cite{BB, BBK, HuangLester, LR}, and the references therein, for other applications of Soundararajan's method.

\subsection{Higher decorrelation of Hecke eigenforms}

Now we consider the joint  distribution of powers of Hecke eigenforms.
For simplicity, we only consider two Hecke eigenforms with the same weight.
We have the following conjecture.

\begin{conjecture}\label{conj:0}
  Let $\Omega$ be a fixed compact set of $\Gamma\backslash\mathbb{H}$,
  such that the boundary $\partial\Omega$ has hyperbolic measure zero.
  Let $f,g\in H_k$  such that  $\langle f,g\rangle=0$. For any positive integer $a\in \mathbb{N}$, we have
  \begin{equation}\label{eqn:fg^a}
    \int_{\Omega} y^{ak} f(z)^a  \overline{g(z)}^a \dd \mu z  = o(1)
  \end{equation}
  as $k\rightarrow \infty$.
\end{conjecture}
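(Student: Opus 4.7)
The strategy is to imitate the spectral approach of Theorem \ref{thm:221}. First, using the boundary-measure-zero hypothesis, sandwich $\mathbb{1}_\Omega$ between smooth, compactly supported test functions $\psi^- \leq \mathbb{1}_\Omega \leq \psi^+$ with $\int(\psi^+ - \psi^-)\, d\mu z < \varepsilon$; this reduces the problem to estimating $I(\psi) := \int_{\Gamma\backslash\mathbb{H}} \psi(z)\, y^{ak} f(z)^a \overline{g(z)}^a\, d\mu z$ for a fixed smooth $\psi$, up to an $\varepsilon\, k^{a/2+\varepsilon}$ error (controlled by Xia's sup-norm bound on $y^{k/2}f$). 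Setting $\Phi := y^{ak} f^a \bar g^a \in L^2(\Gamma\backslash\mathbb{H})$ and expanding it spectrally against the constant $u_0 = \vol(\Gamma\backslash\mathbb{H})^{-1/2}$, Hecke--Maass cusp forms $\{u_j\}_{j\geq 1}$, and Eisenstein series $E_t$, one obtains
\[
I(\psi) = \frac{\vol(\Omega)}{\vol(\Gamma\backslash\mathbb{H})}\,\langle f^a, g^a\rangle + \sum_{j\geq 1} \langle \Phi, u_j\rangle\overline{\langle \psi, u_j\rangle} + \mathrm{Eis}.
\]

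For the case $a = 1$, the constant piece $\vol(\Omega)\vol(\Gamma\backslash\mathbb{H})^{-1}\langle f, g\rangle$ vanishes by hypothesis. Each cuspidal coefficient $\langle y^k f\bar g, u_j\rangle$ is governed by Watson's formula: its squared modulus is, up to a rapidly decaying archimedean weight, proportional to $L(1/2, f\times g\times u_j)/(L(1,\sym^2 f) L(1,\sym^2 g) L(1,\sym^2 u_j)^2)$. Under GRH together with GRC for the Maass spectrum, a Soundararajan-style conditional moment bound for the resulting fractional $L$-sum, exactly as used for Theorem \ref{thm:221}, yields the desired $o(1)$ estimate; the Eisenstein contribution is handled identically via Rankin--Selberg values at $1/2+it$.

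For $a \geq 2$, one would further decompose $f^a = \sum_{h\in H_{ak}} c_h(f)\, h$ (and analogously $g^a$) in a Hecke basis of $S_{ak}$. The constant contribution becomes $\vol(\Omega)\vol(\Gamma\backslash\mathbb{H})^{-1}\sum_h c_h(f)\overline{c_h(g)}\|h\|^2$, and each cuspidal coefficient $\langle \Phi, u_j\rangle$ unfolds into a double sum over $(h_1,h_2) \in H_{ak}^2$ of triple product integrals $\int y^{ak} h_1\bar{h_2}\, u_j\, d\mu$, each of which is evaluated via the Ichino--Watson formula in terms of central values $L(1/2, h_1\times h_2 \times u_j)$ divided by symmetric-square $L$-values at $1$; assuming GRH and GRC one then attempts to apply conditional moment bounds term by term.

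\textbf{Main obstacle.} The hardest part is the constant term $\langle f^a, g^a\rangle$ for $a \geq 2$. Cauchy--Schwarz combined with Conjecture \ref{conj:rwc} only yields the bound $O(1)$, whereas $o(1)$ requires genuine cancellation in the sum $\sum_h c_h(f)\overline{c_h(g)}\|h\|^2$. Writing each $c_h(\cdot)$ via Ichino reduces the task to detecting cancellation between products of central values $L(1/2, f\times f\times h)$ and $L(1/2, g\times g\times h)$ as $h$ ranges over $H_{ak}$, which appears to demand input beyond GRH and GRC -- for instance nontrivial bounds on $L$-functions of type $\sym^2 f \otimes \overline{\sym^2 g}$, or a family-level decorrelation mechanism not provided by the present spectral decomposition alone.
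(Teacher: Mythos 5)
The statement you are examining is a conjecture; the paper does not prove it in general. What the paper does prove is Theorem~\ref{thm:220}, the case $a=2$, $\Omega=\Gamma\backslash\mathbb{H}$, and it does so under GRH alone (GRC is unnecessary there because the spectral sum runs over the holomorphic family $H_{2k}$, where Deligne's theorem supplies the Ramanujan bound). Sketching a strategy and flagging obstructions is fair, but the central obstruction you flag is not real, and seeing why is the main point to take away.

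Your ``Main obstacle'' paragraph claims that proving $\langle f^a, g^a\rangle = o(1)$ for $a\ge 2$ ``requires genuine cancellation in the sum $\sum_h c_h(f)\overline{c_h(g)}\|h\|^2$'' and hence ``input beyond GRH and GRC.'' For $a=2$ this is false, and the mechanism you are missing is precisely the paper's proof of Theorem~\ref{thm:220}. After Parseval in the basis $H_{2k}$, Watson's formula, and the factorization $L(1/2,f\times f\times h)=L(1/2,h)L(1/2,\sym^2 f\times h)$, the sum is bounded by
\[
\frac{1}{k}\sum_{h\in H_{2k}}
\frac{L(1/2,h)\,L(1/2,\sym^2 f\times h)^{1/2}\,L(1/2,\sym^2 g\times h)^{1/2}}{L(1,\sym^2 h)},
\]
all of whose terms are nonnegative (Kohnen--Zagier, Lapid) --- no oscillation is attempted or needed. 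The gain comes from the fact that fractional moments of $L$-functions are genuinely small under GRH, not merely $O(1)$: Proposition~\ref{prop:h} with $l=1$, $l_1=l_2=1/2$ gives the bound $(\log k)^{-1/4+\varepsilon}$. Heuristically, over $h\in H_{2k}$ the quantity $\log L(1/2,\sym^2 f\times h)$ is approximately normal with mean $\approx -\tfrac12\log\log k$ and variance $\approx \log\log k$, so $L^{1/2}$ has average size $(\log k)^{(1/2)((1/2)-1)/2}=(\log k)^{-1/8}$, and the two half-power factors supply $(\log k)^{-1/4}$. It is concavity of $x\mapsto x^{1/2}$, not sign cancellation, that produces the $o(1)$; applying Cauchy--Schwarz against Conjecture~\ref{conj:rwc} discards exactly this gain and is strictly lossy. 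Your suspicion that more is needed for $a\ge 3$ or for general $\Omega$ (where Maass and Eisenstein contributions enter and $\langle f^a,h\rangle$ no longer has a clean $L$-value interpretation) is more defensible, but as stated the claim is too strong and contradicts Theorem~\ref{thm:220}. Two minor corrections: in your $a=1$ display Watson's formula should have $L(1,\sym^2 u_j)$, not $L(1,\sym^2 u_j)^2$, in the denominator (the square occurs only when the same $\GL(2)$ form appears twice, as for $|F|^2$); and controlling $\int(\psi^+-\psi^-)\,y^{ak}|f|^a|g|^a\,\dd\mu z$ via Xia's sup-norm bound alone gives $\varepsilon\,k^{a/2+o(1)}$, which does not tend to zero for fixed $\varepsilon$, so one must instead invoke an a priori $L^2$-type control on the density near $\partial\Omega$ --- essentially a case of Conjecture~\ref{conj:jointmass}.
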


\begin{remark}
  Let $Z\sim \mathcal{CN}(0,1)$ be a standard complex normal distribution. Then for any $a\in \mathbb{N}$, we have
  $\mathop{\mathbb{E}}[Z^{a}]=0$. Hence we have $\mathop{\mathbb{E}}[Z^{a} \bar{W}^a]=0$, where $Z$ and $W$ are two independent complex Gaussians.  So we expect that there is no main term in \eqref{eqn:fg^a}.
\end{remark}

For $a=1$, this was proved by Constantinescu \cite{Constantinescu} and  Huang \cite{Huang}.

Assuming GRH, we will confirm the case $a=2$ when $\Omega=\Gamma\backslash \mathbb{H}$.
Our main result in this setting  is the following estimate.

\begin{theorem}\label{thm:220}
  Assume GRH. Let $f,g \in H_k$ such that  $\langle f,g\rangle=0$. Then  we have
  \[
    \int_{\Gamma\backslash \mathbb{H}} y^{2k} f(z)^2 \overline{g(z)}^2 \dd \mu z  = O((\log k)^{-1/4+\varepsilon})
  \]
  as $k\rightarrow \infty$. In particular, Conjecture \ref{conj:0} is true when $a=2$ and $\Omega=\Gamma\backslash \mathbb{H}$, under GRH.
\end{theorem}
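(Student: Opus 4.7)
The plan is to relate the target integral to the quantity controlled by Theorem~\ref{thm:221} via a parity identity coming from the involution $\sigma:z\mapsto-\bar z$ of $\Gamma\backslash\mathbb{H}$. Put $F(z):=y^{k/2}f(z)$, $G(z):=y^{k/2}g(z)$ and define the weight-zero, $\Gamma$-invariant function
\[
 h(z)\;:=\;F(z)\overline{G(z)}\;=\;y^kf(z)\overline{g(z)}.
\]
The integral of interest is $\int_{\Gamma\backslash\mathbb{H}} h(z)^2\,\dd\mu z=\langle h,\bar h\rangle$, while Theorem~\ref{thm:221} gives $\langle h,h\rangle/\vol(\Gamma\backslash\mathbb{H})=1+O((\log k)^{-1/4+\varepsilon})$ for the sister integral $\int|h|^2\,\dd\mu z$.

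Since $f,g$ have real Fourier coefficients, $\overline{f(z)}=f(\sigma z)$ and similarly for $g$, so $\overline{h(z)}=h(\sigma z)$. I would decompose $h$ against the orthonormal Hecke basis $\{u_0\}\cup\{u_j\}_{j\geq1}\cup\{E_t\}_{t\in\mathbb{R}}$ and use that each Hecke--Maass form $u_j$ has a parity $\epsilon_j\in\{\pm1\}$ under $\sigma$, while $u_0$ and $E_t$ are both $\sigma$-invariant. A change of variables yields
\[
 \langle h,\bar h\rangle \;=\; \frac{\langle h,1\rangle^2}{\vol(\Gamma\backslash\mathbb{H})} + \sum_{j\geq1}\epsilon_j|\langle h,u_j\rangle|^2 + \frac{1}{4\pi}\int_{\mathbb{R}}|\langle h,E_t\rangle|^2\,\dd t,
\]
in which the first term vanishes because $\langle h,1\rangle=\langle f,g\rangle=0$ by hypothesis. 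Subtracting the analogous Parseval expansion of $\langle h,h\rangle$ leaves the clean identity $\langle h,\bar h\rangle = \langle h,h\rangle - 2\sum_{u_j\text{ odd}}|\langle h,u_j\rangle|^2$.

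Combined with Theorem~\ref{thm:221}, this reduces the theorem to proving
\[
 \sum_{u_j\text{ odd}}|\langle h,u_j\rangle|^2 \;=\; \tfrac12\vol(\Gamma\backslash\mathbb{H}) + O\bigl(\vol(\Gamma\backslash\mathbb{H})(\log k)^{-1/4+\varepsilon}\bigr).
\]
By Watson's formula, $|\langle h,u_j\rangle|^2$ is proportional to $L(1/2,f\times g\times u_j)/L(1,\sym^2 u_j)$ times an archimedean factor with exponential decay for $|t_j|\gg\sqrt{k}$. Since even and odd Hecke--Maass forms have equal Weyl density, the restriction to odd parity should pick up exactly half of the main term of the fractional-moment bound that produced Theorem~\ref{thm:221}. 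I would therefore run the same Soundararajan-type moment analysis under GRH with the parity projector $(1-\epsilon_j)/2$ inserted, so that both the constant $\tfrac12$ and the error $(\log k)^{-1/4+\varepsilon}$ propagate directly from the argument for Theorem~\ref{thm:221}.

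The hard part will be this parity-restricted moment computation. In particular, the sign $\epsilon_j$ appears in the archimedean root number of $L(s,f\times g\times u_j)$, and one must verify that the $\epsilon_j$-twisted first moment $\sum_j \epsilon_j L(1/2,f\times g\times u_j)e^{-\pi t_j^2/k}/L(1,\sym^2 u_j)$ still satisfies the $(\log k)^{-1/4+\varepsilon}$ upper bound obtained in Theorem~\ref{thm:221} for the untwisted moment. This is plausible because the Soundararajan upper bound is essentially insensitive to sign twists, but it requires a careful reexamination of that argument.
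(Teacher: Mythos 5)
Your parity identity is correct: writing $h=F\overline{G}$ with $F=y^{k/2}f$, $G=y^{k/2}g$, one does have $\overline{h}=h\circ\sigma$, hence $\langle h,\overline{h}\rangle=\langle h,h\circ\sigma\rangle=\sum_j\epsilon_j|\langle h,u_j\rangle|^2+(\text{Eisenstein})$, and since $\langle h,1\rangle=\langle f,g\rangle=0$, subtracting the ordinary Parseval expansion of $\langle h,h\rangle$ gives
\[
\langle h,\overline{h}\rangle=\langle h,h\rangle-2\sum_{u_j\ \mathrm{odd}}|\langle h,u_j\rangle|^2.
\]
However, the reduction this buys you is circular. You now need the \emph{asymptotic}
\[
\sum_{u_j\ \mathrm{odd}}|\langle h,u_j\rangle|^2=\tfrac12\vol(\Gamma\backslash\mathbb{H})+O\bigl(\vol(\Gamma\backslash\mathbb{H})(\log k)^{-1/4+\varepsilon}\bigr),
\]
and, given Theorem~\ref{thm:221} for $\langle h,h\rangle$, this statement is \emph{equivalent} to Theorem~\ref{thm:220}; you have not made progress. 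Worse, the tool you propose — Soundararajan's moment method — is intrinsically a one-sided upper bound technique. It can show that the odd-restricted first moment of $L(1/2,f\times g\times u_j)$ is $O(\vol)$, but it cannot produce the main term $\tfrac12\vol$ nor the cancellation between $S_{\mathrm{even}}+S_{\mathrm{Eis}}$ and $S_{\mathrm{odd}}$ to within $(\log k)^{-1/4+\varepsilon}$.

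You also misidentify where the main term in Theorem~\ref{thm:221} lives. In that proof, the constant $\vol(\Gamma\backslash\mathbb{H})$ arises from $\langle |F|^2,u_0\rangle\langle u_0,|G|^2\rangle$ in the spectral expansion of $|F|^2$ and $|G|^2$ \emph{separately}, while the cusp-form contribution $\sum_j\langle |F|^2,u_j\rangle\langle u_j,|G|^2\rangle$ is the error term bounded by $O((\log k)^{-1/4+\varepsilon})$ via Proposition~\ref{prop:u_j}, and for odd $u_j$ those periods actually vanish. In your expansion of the non-real function $h=F\overline{G}$, the constant-term contribution $|\langle h,u_0\rangle|^2$ is \emph{zero}, so the full mass $\asymp\vol$ is carried by the cusp forms and Eisenstein series — there is no ``main term of the fractional-moment bound'' of which the odd part is half. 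These are different periods ($\langle F\overline{G},u_j\rangle$ versus $\langle |F|^2,u_j\rangle$), governed by different triple-product $L$-functions ($f\times g\times u_j$ versus $f\times f\times u_j$), and the former sits at the critical exponent $1$ rather than the sub-critical exponents $(1,1/2,1/2)$ that yield a $\log$-power saving.

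The paper's actual proof avoids all of this by going back to holomorphic forms: $f^2,g^2\in S_{2k}$, so $\langle f^2,g^2\rangle=\sum_{h\in H_{2k}}\langle f^2,h\rangle\langle h,g^2\rangle$; Watson's formula and nonnegativity reduce it to the single fractional moment $\frac{1}{k}\sum_{h\in H_{2k}}L(1/2,h)\,L(1/2,\sym^2 f\times h)^{1/2}L(1/2,\sym^2 g\times h)^{1/2}/L(1,\sym^2 h)$, which Proposition~\ref{prop:h} (Soundararajan's method with the Petersson formula in place of Kuznetsov) bounds by $(\log k)^{-1/4+\varepsilon}$. There is no main term to extract and no parity decomposition to control.
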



Note that $f^2 ,g^2 \in H_{2k}$. To prove Theorem \ref{thm:220}, by Parseval's identity, we have
\[
  \int_{\Gamma\backslash \mathbb{H}} y^{2k} f(z)^2 \overline{g(z)}^2  \dd \mu z
  = \sum_{h\in H_{2k}} \langle f^2,h\rangle \langle h,g^2 \rangle.
\]
We should take absolute values on the right hand side to get an upper bound. By Watson's formula and Stirling's formula, we have
\[
  \int_{\Gamma\backslash \mathbb{H}} y^{2k} f(z)^2 \overline{g(z)}^2  \dd \mu z
  \ll  \frac{1}{k}\sum_{h\in H_{2k}} \frac{L(1/2,f\times f\times h)^{1/2}L(1/2,g\times g\times h)^{1/2}}{L(1,\sym^2 f)L(1,\sym^2 g)L(1,\sym^2 h)}.
\]
Using the factorization
\[
  L(1/2,f\times f\times h) = L(1/2, h)L(1/2,\sym^2 f\times h)
\]
and nonnegativity of central $L$-values, we need to consider the following fractional moment of $L$-functions
\[
  \frac{1}{k}\sum_{h\in H_{2k}} \frac{L(1/2,h)^{1/2}L(1/2,\sym^2 f\times h)^{1/2}L(1/2,\sym^2 g\times h)^{1/2}}{L(1,\sym^2 h)}.
\]
By Soundararajan's method again, this is bounded by $O((\log k)^{-1/4+\varepsilon})$ under GRH.

\subsection{Applications to the triple product $L$-functions}

Let  $\lambda_f(n)$ be the $n$-th Hecke eigenvalue of $f$, and let $\alpha_{f}(p)$ and $\alpha_{f}(p)^{-1}$ be the Satake parameters  of $f$ at prime $p$. We have $\lambda_f(p)=\alpha_f(p)+\alpha_f(p)^{-1}$. Define the symmetric square $L$-function of $f$ by
\[
  L(s,\sym^2 f) = \zeta(2s) \sum_{n\geq1} \frac{\lambda_f(n^2)}{n^s}, \quad \Re(s)>1.
\]
Let $f\in H_k$, $g\in H_\ell$ and $h\in H_{k+\ell}$. The triple product $L$-function of $f,g,h$ is defined  by
\[
  L(s,f\times g\times h) = \prod_{p} \prod_{a=\pm1}\prod_{b=\pm1}\prod_{c=\pm1} \left(1-\frac{\alpha_{f}(p)^{a}\alpha_{g}(p)^{b}\alpha_{h}(p)^{c}}{p^s}\right)^{-1}, \quad \Re(s)>1.
\]
All these $L$-functions have analytic continuation to the whole complex plane.
By Watson's formula for $|\langle fg , h\rangle|^2$, we know $\langle fg , fg\rangle$ is related to  the following first moment of the triple product $L$-values,
\[
  \frac{2\pi^2}{k+\ell-1} \sum_{h\in H_{k+\ell}} \frac{
    L(1/2,f\times g\times h)}{L(1,\sym^2 h)}.
\]
Unconditionally, Blomer, Khan, and Young \cite[Corollary 1.5]{BlomerKhanYoung2013distribution} proved the following nontrivial upper bound
\[
  O((k\ell)^{1/6+\varepsilon}).
\]
Assume GRH. By a similar argument as in Zenz \cite{Zenz}, one may prove a sharp upper bound
\[
  O(L(1,\sym^2 f) L(1,\sym^2 g)).
\]

However, as a consequence  of Theorem \ref{thm:221}, we are able to prove an asymptotic formula for the first moment when $f\neq g$, under GRH and GRC. We have the following theorem.

\begin{theorem}\label{thm:moment}
  Assume GRH and GRC. Let $k, \ell$ be two even positive integers, and let  $f\in H_k$ and $g\in H_\ell$ be two Hecke eigenforms. Assume $\langle f,g\rangle=0$ if $k=\ell$.  Then  we have
  \[
    \frac{2\pi^2}{k+\ell-1} \sum_{h\in H_{k+\ell}} \frac{
    L(1/2,f\times g\times  h )}{L(1,\sym^2 h)}
    = 2 \frac{L(1,\sym^2 f) L(1,\sym^2 g)}{\zeta(2)} (1+ O((\log (k+\ell))^{-1/4+\varepsilon })),
  \]
  as $\max(k,\ell)$ goes to infinity.
\end{theorem}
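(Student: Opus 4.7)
The plan is to derive Theorem~\ref{thm:moment} as a direct consequence of Theorem~\ref{thm:221} via Watson's triple product formula together with Parseval's identity.

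First, I would invoke Watson's explicit formula in the balanced holomorphic configuration $(k,\ell,k+\ell)$: for any Hecke eigenform $h\in H_{k+\ell}$ one has an identity of the shape
\[
  \frac{|\langle fg,h\rangle|^2}{\langle f,f\rangle\langle g,g\rangle\langle h,h\rangle}
  = c_{\infty}(k,\ell)\cdot \frac{L(1/2,f\times g\times h)}{L(1,\sym^2 f)L(1,\sym^2 g)L(1,\sym^2 h)},
\]
where $c_{\infty}(k,\ell)$ is the explicit ratio of the four Archimedean Gamma factors of $\Lambda(1/2,f\times g\times h)$ to the three Gamma factors of $\Lambda(1,\sym^2 f)\Lambda(1,\sym^2 g)\Lambda(1,\sym^2 h)$. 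Crucially, $c_{\infty}(k,\ell)$ is independent of $h$; in the balanced case Stirling's formula reduces it to a clean expression whose leading order is proportional to $1/(k+\ell-1)$.

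Second, I would sum over the Hecke basis $H_{k+\ell}$ and apply Parseval's identity
\[
  \sum_{h\in H_{k+\ell}} \frac{|\langle fg,h\rangle|^2}{\langle h,h\rangle}
  = \langle fg,fg\rangle
  = \int_{\Gamma\backslash\mathbb{H}} y^{k+\ell}|f(z)|^2|g(z)|^2\,\dd\mu z.
\]
Substituting Watson's identity inside this sum converts the normalized moment appearing on the left-hand side of Theorem~\ref{thm:moment} into a single expression of the form
\[
  \bigl(\text{explicit constant}\bigr)\cdot L(1,\sym^2 f)L(1,\sym^2 g)\cdot \frac{\langle fg,fg\rangle}{\vol(\Gamma\backslash\mathbb{H})},
\]
after inserting the normalization $\langle f,f\rangle=\langle g,g\rangle=\vol(\Gamma\backslash\mathbb{H})$.

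Third, I would apply Theorem~\ref{thm:221} to evaluate the last factor as $1+O((\log(k+\ell))^{-1/4+\varepsilon})$; the error term propagates linearly to the final estimate. What remains is to verify that the explicit constant arising from $c_{\infty}(k,\ell)$ combined with the prefactor $2\pi^2/(k+\ell-1)$, together with $\vol(\Gamma\backslash\mathbb{H})=\pi/3$ and $\zeta(2)=\pi^2/6$, collapses precisely to the coefficient $2/\zeta(2)$ predicted in the statement.

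The main obstacle is purely bookkeeping. Watson's formula appears in the literature under several normalization conventions, so one must match the cited version against the $L^2$-normalization $\langle f,f\rangle=\vol(\Gamma\backslash\mathbb{H})$ used here. Applying Stirling in the balanced configuration---where one of the four Gamma factors at $s=1/2$ reduces to a bounded $\Gamma_{\mathbb{C}}(1/2)$ while the others grow like $\Gamma(k)$, $\Gamma(\ell)$, $\Gamma(k+\ell)$---must be done carefully to land on the exact main-term constant. No further analytic input is required, since all of the substantive estimation has already been carried out in Theorem~\ref{thm:221}.
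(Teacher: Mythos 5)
Your approach is exactly the paper's: Watson's formula to convert $|\langle fg,h\rangle|^2$ into a triple-product $L$-value, Parseval's identity to sum over $h\in H_{k+\ell}$, and Theorem \ref{thm:221} to evaluate $\langle fg,fg\rangle = \vol(\Gamma\backslash\mathbb{H})(1+O((\log(k+\ell))^{-1/4+\varepsilon}))$. One small clarification on the bookkeeping you flag: in the balanced configuration $(k,\ell,k+\ell)$ no Stirling approximation is needed, since $\Gamma_\mathbb{C}(k)$ and $\Gamma_\mathbb{C}(\ell)$ in $L_\infty(1/2,f\times g\times h)$ cancel exactly against those in $L_\infty(1,\sym^2 f)L_\infty(1,\sym^2 g)$, and $\Gamma_\mathbb{C}(k+\ell-1)/\Gamma_\mathbb{C}(k+\ell)=2\pi/(k+\ell-1)$ is an exact identity, so the Archimedean constant $\pi^3/(2(k+\ell-1))$ in \eqref{eqn:norm2moment} is exact rather than a leading-order asymptotic.
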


\begin{remark}
  Usually, one may expect to prove sharp upper bounds for moments of $L$-functions under GRH.
  It seems that no general method is known to prove asymptotic formulas for moments of $L$-functions.
  The other setting we know with our feature is Soundararajan and Young \cite{SY}.
\end{remark}

As an immediate corollary, we have the following nonvanishing result for the triple product central $L$-values.

\begin{corollary}\label{cor:nonvanishing}
  Assume GRH and GRC. Let $f,g$ be the same as in Theorem \ref{thm:moment}. There exist at least $(k+\ell)^{1-o(1)}$ Hecke eigenforms $h\in H_{k+\ell}$ such that $L(1/2,f\times g\times h)\neq 0$.
\end{corollary}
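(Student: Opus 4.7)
The plan is to deduce Corollary \ref{cor:nonvanishing} from the asymptotic formula in Theorem \ref{thm:moment} by a standard counting argument combined with pointwise bounds on individual $L$-values under GRH. Let $\mathcal{N} := \#\{h \in H_{k+\ell} : L(1/2, f \times g \times h) \neq 0\}$ be the quantity to lower bound. By Watson's formula, $L(1/2, f \times g \times h)$ is a nonnegative multiple of $|\langle fg, h\rangle|^2$, so it is $\geq 0$, and the sum in Theorem \ref{thm:moment} is a sum of nonnegative terms. Therefore Theorem \ref{thm:moment} gives, under GRH and GRC,
\[
\sum_{h \in H_{k+\ell}} \frac{L(1/2, f \times g \times h)}{L(1, \sym^2 h)} \gg (k+\ell)\, L(1, \sym^2 f)\, L(1, \sym^2 g).
\]

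On the other hand, the same sum is trivially bounded above by $\mathcal{N}$ times the maximum of an individual summand over $h\in H_{k+\ell}$. Under GRH, the analytic conductor of the degree-$8$ $L$-function $L(s, f\times g\times h)$ is polynomial in $k+\ell$, so the Chandee-type Lindel\"of-on-GRH estimate yields $L(1/2, f\times g\times h) \ll (k+\ell)^{o(1)}$; and Hoffstein--Lockhart-type bounds under GRH give the two-sided estimate $(\log\log(k+\ell))^{-1} \ll L(1,\sym^2 \varphi) \ll \log\log(k+\ell)$ for $\varphi\in\{f,g,h\}$. Combining these with the lower bound above yields
\[
\mathcal{N} \gg \frac{(k+\ell)(\log\log(k+\ell))^{-2}}{(k+\ell)^{o(1)}\cdot \log\log(k+\ell)} = (k+\ell)^{1-o(1)},
\]
which is the claim.

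Given Theorem \ref{thm:moment}, there is no serious obstacle to this deduction: the entire argument is routine, and the only ingredients beyond the asymptotic formula are textbook consequences of GRH for individual $L$-functions. The real content lies, as usual, in the first-moment asymptotic itself, which has already been established in Theorem \ref{thm:moment}.
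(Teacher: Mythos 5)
Your proof is correct and follows exactly the route the paper takes in the remark after Corollary \ref{cor:nonvanishing}: lower-bound the sum by Theorem \ref{thm:moment}, then divide by the maximal individual contribution using the conditional Lindel\"of bound $L(1/2,f\times g\times h)\ll (k+\ell)^{o(1)}$ and the two-sided bounds for $L(1,\sym^2\varphi)$ (the paper cites Lau--Wu, with upper bound $(\log\log k)^3$ rather than $\log\log k$, but this changes nothing in the $o(1)$). No meaningful difference in approach or content.
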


Under GRH, we have $L(1/2,f\times g\times h)=(k+\ell)^{o(1)}$ and $(\log\log k)^{-1} \ll  L(1, \sym^2 f) \ll (\log\log k)^{3}$ (see e.g. \cite[Theorem 3]{LW}). So Corollary \ref{cor:nonvanishing} follows from Theorem \ref{thm:moment}.
One may prove a better quantitative nonvanishing result by considering the second moment of the triple product  $L$-functions under GRH.

\subsection{Plan for this paper}
The rest of this paper is organized as follows.
In \S \ref{sec:preliminaries}, we give some lemmas on sums of Fourier coefficients and on $L$-functions.
In \S \ref{sec:joint-mass}, we use the spectral method and Watson's formula to prove Theorem \ref{thm:221} by assuming Propostion \ref{prop:u_j}.
In \S \ref{sec:fractional-moment}, a variant of Soundararajan's method is applied to give an almost sharp upper bound of certain fractional moment of $L$-functions, which proves Propostion \ref{prop:u_j}.
In \S \ref{sec:higher-decorrelation}, we give a sketch proof of Theorem \ref{thm:220}.
Finally, in \S \ref{sec:moment}, we use Theorem \ref{thm:221} to prove Theorem \ref{thm:moment}.
In the appendix, we give a heuristic argument of Theorem \ref{thm:moment}.

\medskip
\textbf{Notation.}
Throughout the paper, $\varepsilon$ is an arbitrarily small positive number;
all of them may be different at each occurrence.
As usual, $e(x)=e^{2\pi i x}$.
We use $y\asymp Y$ to mean that $c_1 Y\leq |y|\leq c_2 Y$ for some positive constants $c_1$ and $c_2$.
The symbol $f \ll g$ (or $g \gg f$) means that $|f| \le C g$
for some constant $C>0$. 
The implied constant may depend on $\varepsilon$.


\section{Preliminaries}\label{sec:preliminaries}


\subsection{The Kuznetsov trace formula}

Let $\mathcal{B}= \{u_j\}$ be an orthonormal basis of the space of Maass cusp forms for $\SL(2,\mathbb{Z})$,
where $u_j$ is a Hecke--Maass cusp form with the spectral parameter $t_j$.
The Fourier expansion of $u_j$ is given by
\[
  u_j(z) = \rho_j(1) \sum_{n\neq0} \lambda_j(n) |n|^{-1/2} W_{s_j}(nz) ,
\]
where $W_s(z)= 2\sqrt{|y|} K_{s-1/2}(2\pi |y|) e(x)$ is the Whittaker function and $s_j=1/2+it_j$.
Here $\lambda_j(n)$ is the $n$-th Hecke eigenvalue of $u_j$.
We may further assume that $u_j$ is even or odd according to $u_j(-\bar{z})=u_j(z)$ or $u_j(-\bar{z})=-u_j(z)$.
The symmetric square $L$-function of $u_j$ is defined by
\[
  L(s,\sym^2 u_j) = \zeta(2s) \sum_{n\geq1} \frac{\lambda_j(n^2)}{n^s}, \quad \Re(s)>1.
\]
Let  $h(t)$ be an even function satisfying that $h(t)$ is holomorphic in $|\Im(t)|\leq 1/2+\varepsilon$ and $h(t)\ll (1+|t|)^{-2-\varepsilon}$ in this strip.
We have the following Kuznetsov trace formula (see \cite[Eq. (3.17)]{CI})
\begin{multline}\label{eqn:KTF}
  {\sum_{j\geq1}}^{\rm even} \frac{\lambda_j(m)\overline{\lambda_j(n)}}{L(1,\sym^2 u_j)} h(t_j)
  + \frac{1}{4\pi} \int_{\mathbb{R}} \frac{\eta_t(m) \eta_t(n) }{|\zeta(1+2it)|^2} h(t)  \dd t
  \\
  = \frac{1}{2} \delta_{m,n} H + \frac{1}{2} \sum_{\pm}\sum_{c\geq1} \frac{S(n,\pm m,c)}{c} H^{\pm} \left(\frac{4\pi \sqrt{mn}}{c}\right),
\end{multline}
for $m,n\geq1$, where $\delta_{m,n}$ is 1 if $m=n$ and is 0 otherwise, and
\begin{align*}
  H & = \frac{1}{2\pi^2} \int_{0}^{\infty} h(t) \tanh(\pi t) t \dd t, \\
  H^+(x) & = \frac{i}{2\pi} \int_{-\infty}^{\infty} J_{2it}(x)\frac{h(t)t}{\cosh(\pi t)} \dd t, \\
  H^-(x) & = \frac{1}{\pi^2} \int_{-\infty}^{\infty} K_{2it}(x)\sinh(\pi t)h(t)t \dd t,
\end{align*}
and $J_\nu(x)$ and $K_\nu(x)$ are the standard $J$-Bessel function and $K$-Bessel function respectively.
Here $\eta_t(n)=\sum_{ab=n}(a/b)^{it}$ and
$S(m,n,c)$ is the classical Kloosterman sum. We have the following well-known  Weil bound (see e.g. \cite[Corollary 11.12]{IwaniecKowalski2004analytic})
\[
  S(m,n,c)=\sum_{\substack{d \ (\mod c) \\ (d,c)=1 } }
  e\left(\frac{md+n\bar{d}}{c}\right) \leq c^{1/2} (m,n,c)^{1/2} \tau(c),
\]
where $\bar{d}$ means $\bar{d}d\equiv 1\ \mod c$ and $\tau(c)=\sum_{d\mid c}1$ is the divisor function.

We will need a standard mean value estimate following from the Kuznetsov trace formula, whose proof we include for completeness.

\begin{lemma}\label{lemma:KTF}
  Let $T$ be large, and $m,n$ be positive integers with $mn\leq T^2$. Then we have
  \[
    {\sum_{j\geq1}}^{\rm even} \frac{\lambda_j(m) \lambda_j(n)}{L(1,\sym^2 u_j)} e^{-t_j^2/T^2}
    \ll \delta_{m,n} T^2 + (mn)^{1/2} T^{1+\varepsilon}.
  \]
\end{lemma}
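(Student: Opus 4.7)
The plan is to apply the Kuznetsov trace formula \eqref{eqn:KTF} with the test function $h(t) = e^{-t^2/T^2}$, which is even, entire, and of rapid decay in any horizontal strip of bounded width. The three contributions on the right-hand side of \eqref{eqn:KTF} will be handled separately, and I will also need to absorb the Eisenstein integral sitting on the left.

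First, I would evaluate the diagonal. A direct calculation gives
\[
  H = \frac{1}{2\pi^2}\int_0^\infty e^{-t^2/T^2}\tanh(\pi t)\, t\, \dd t \asymp T^2,
\]
so the term $\frac{1}{2}\delta_{m,n}H$ supplies the main contribution $\delta_{m,n}T^2$ on the right of the claimed bound. Second, I would bound the Eisenstein integral
\[
  \frac{1}{4\pi}\int_{\mathbb{R}} \frac{\eta_t(m)\eta_t(n)}{|\zeta(1+2it)|^2}\, e^{-t^2/T^2}\,\dd t
\]
that appears on the left of \eqref{eqn:KTF}. Using the divisor bound $|\eta_t(k)|\leq \tau(k)$ together with the classical estimate $|\zeta(1+2it)|^{-1}\ll \log(2+|t|)$, the Gaussian weight truncates the $t$-integral at height $T$ and yields $O(T^{1+\varepsilon}(mn)^{\varepsilon})$, which is comfortably absorbed by the $(mn)^{1/2}T^{1+\varepsilon}$ error.

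The main obstacle is controlling the Kloosterman contribution
\[
  \frac{1}{2}\sum_\pm \sum_{c\geq 1}\frac{S(n,\pm m,c)}{c}\, H^\pm\!\left(\frac{4\pi\sqrt{mn}}{c}\right).
\]
I would apply the Weil bound $|S(n,\pm m, c)|\ll c^{1/2+\varepsilon}(m,n,c)^{1/2}$ and then invoke standard estimates for the Bessel transforms $H^\pm(x)$ derived from Mellin--Barnes representations of $J_{2it}$ and $K_{2it}$ combined with stationary phase, as found for instance in Iwaniec's \emph{Spectral methods of automorphic forms} or Motohashi's monograph. Splitting the $c$-sum according to whether $x = 4\pi\sqrt{mn}/c$ is $\ll T$ or $\gg T$, and reducing the resulting tail sums via $\sum_{c\geq c_0} c^{-3/2+\varepsilon}(m,n,c)^{1/2}\ll c_0^{-1/2+\varepsilon}(mn)^{\varepsilon}$, the total off-diagonal contribution is comfortably $O((mn)^{1/2}T^{1+\varepsilon})$ in the range $mn\leq T^2$.

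Combining the diagonal main term, the Eisenstein integral, and the Kloosterman bound yields the stated inequality. I expect the Bessel transform estimates to be the only delicate point, but they are routine given the Gaussian weight; all other steps are direct applications of Weil's bound and the divisor bound.
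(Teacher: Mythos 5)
Your proposal is correct and follows essentially the same route as the paper's proof: apply Kuznetsov with $h(t)=e^{-t^2/T^2}$, extract the diagonal $H\asymp T^2$, bound the Eisenstein integral trivially via the divisor and $\zeta(1+2it)^{-1}\ll\log(2+|t|)$ bounds, and control the Kloosterman side with Weil's bound together with $H^\pm(x)\ll T^{1+\delta}x^{1-\delta}$ for $x\ll T$. The paper cites Young's Lemmas 7.1--7.2 for the Bessel transform bounds where you point to Iwaniec/Motohashi, and your case split on $x\gg T$ is actually vacuous under $mn\le T^2$, but neither difference is substantive.
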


\begin{remark}
  One can prove stronger result by dealing with oscillatory integrals. However, the above lemma is good enough for our application.
\end{remark}

\begin{proof}
  Let $h(t)=\exp(-t^2/T^2)$. By \eqref{eqn:KTF}, we have
  \begin{multline*}
  {\sum_{j\geq1}}^{\rm even} \frac{\lambda_j(m)\overline{\lambda_j(n)}}{L(1,\sym^2 u_j)} h(t_j)
  \\
  = \frac{1}{2} \delta_{m,n} H
  + \frac{1}{2} \sum_{\pm}\sum_{c\geq1} \frac{S(n,\pm m,c)}{c}
  H^{\pm} \left(\frac{4\pi \sqrt{mn}}{c}\right)
  -\frac{1}{4\pi} \int_{\mathbb{R}} \frac{\eta_t(m) \eta_t(n) }{|\zeta(1+2it)|^2} h(t)  \dd t.
\end{multline*}
  It is easy to see that $H\ll T^2$. By the standard bounds of $\zeta(1+2it)$, we can also estimate the Eisenstein series contribution trivially
  \[
    \frac{1}{4\pi} \int_{\mathbb{R}} \frac{\eta_t(m) \eta_t(n) }{|\zeta(1+2it)|^2} h(t)  \dd t
    \ll \int_{\mathbb{R}} \tau(m)\tau(n) (\log T)^2 e^{-t^2/T^2} \dd t \ll T^{1+\varepsilon}.
  \]

  Now we estimate these terms with Bessel functions. Denote
  \[
    S^\pm = \sum_{c\geq1} \frac{S(n,\pm m,c)}{c} H^{\pm} \left(\frac{4\pi \sqrt{mn}}{c}\right).
  \]
  For $H^+$, the same proof as in \cite[Lemma 7.1]{Young} gives
  \[
    H^+(x) \ll T x , \quad \textrm{for } x\ll T.
  \]
  For $H^-$, the same proof as in \cite[Lemma 7.2]{Young} gives
  \[
    H^-(x) \ll T^{1+\delta} x^{1-\delta}, \quad \textrm{for } x\ll T, \ \delta\in(0,1).
  \]
  So for $mn\leq T^2$ we have
  \[
    S^\pm \ll \sum_{c\geq1} \frac{(m,n,c)^{1/2}}{c^{1/2-\varepsilon}} T^{1+\delta} \left(\frac{\sqrt{mn}}{c}\right)^{1-\delta}
    \ll T^{1+\delta+\varepsilon} (mn)^{1/2-\delta/2},
  \]
  for any $\delta\in(0,1/2)$.  Taking $\delta=\varepsilon$, we complete the proof.
\end{proof}

\subsection{$L$-values to Dirichlet series}

Let $\alpha_j(p)$ and $\beta_j(p)$ denote the Satake parameters for $u_j$ at $p$.
We have $\alpha_j(p)+\beta_j(p)=\lambda_j(p)$, $\alpha_j(p)\beta_j(p)=1$, and
\[
  L(s,u_j) = \sum_{n\geq1} \frac{\lambda_j(n)}{n^s}
  = \prod_{p} \prod_{i\in\{\pm1\}} \left(1-\frac{\alpha_j(p)^i}{p^s}\right)^{-1},
  \quad \Re(s)>1.
\]
Let
\[
  \Lambda_j(p^b) = \alpha_j(p)^b +\alpha_j(p)^{-b}.
\]
Note that $L(1/2,u_j)\geq0$ (see \cite{KatokSarnak}).
We will use the following lemma.
\begin{lemma}\label{lemma:logL1}
  Assume GRH.  Then for $x>10$, we have
  \[
    \log L(1/2,u_j) \leq \sum_{p^b \leq x} \frac{\Lambda_j(p^b) }{b p^{b(1/2+\frac{1}{\log x})}} \frac{\log \frac{x}{p^b}}{\log x}
    + O\left( \frac{\log t_j}{\log x} + 1 \right).
  \]
\end{lemma}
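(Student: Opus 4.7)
The plan is to transplant Soundararajan's pointwise upper bound for $\log|\zeta(1/2+it)|$ \cite{soundararajan2009moments} to the Hecke--Maass $L$-function $L(s,u_j)$, with the analytic conductor played by $\log t_j$. Write $\sigma_0:=1/2+1/\log x$. We may assume $L(1/2,u_j)>0$, since otherwise the left-hand side is $-\infty$ and the inequality is trivial.

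First, I would reduce from the critical line to $\sigma=\sigma_0$. Applying the Hadamard factorisation of the completed $L$-function together with Stirling for the $\Gamma$-factors yields
\[
\frac{L'}{L}(s,u_j)=\sum_\rho\frac{1}{s-\rho}+O(\log(t_j+|\Im s|+2)),
\]
where the sum runs over non-trivial zeros. Under GRH every $\rho$ has $\Re(\rho)=1/2$, so $\Re(1/(\sigma-\rho))\ge 0$ for real $\sigma>1/2$; in particular $(L'/L)(\sigma,u_j)\ge -C\log t_j$ on $(1/2,1]$. Integrating the identity $\log L(\sigma_0,u_j)-\log L(1/2,u_j)=-\int_{1/2}^{\sigma_0}(L'/L)(\sigma,u_j)\,d\sigma$ then gives
\[
\log L(1/2,u_j)\le\log L(\sigma_0,u_j)+O\!\left(\frac{\log t_j}{\log x}\right).
\]

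Second, I would bound $\log L(\sigma_0,u_j)$ by a short prime sum via Mellin--Perron. For $\Re(s)>1$ the Euler product gives the absolutely convergent expansion $\log L(s,u_j)=\sum_{p,\,b\ge 1}\Lambda_j(p^b)/(bp^{bs})$. Combined with the identity $\frac{1}{2\pi i}\int_{(c)}y^w w^{-2}\,dw=(\log y)^+$ for $c>0$, applied termwise with $y=x/p^b$, this produces
\[
\sum_{p^b\le x}\frac{\Lambda_j(p^b)}{bp^{b\sigma_0}}\cdot\frac{\log(x/p^b)}{\log x}=\frac{1}{2\pi i\log x}\int_{(c)}\log L(\sigma_0+w,u_j)\frac{x^w}{w^2}\,dw
\]
for any $c>1-\sigma_0$. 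I would then shift the contour to $\Re(w)=-1/\log x$. Under GRH, $\log L(s,u_j)$ is analytic in $\Re(s)>1/2$, so the only singularity crossed is the double pole at $w=0$; its residue equals $\log L(\sigma_0,u_j)+(L'/L)(\sigma_0,u_j)/\log x=\log L(\sigma_0,u_j)+O(\log t_j/\log x)$, using the bound on $L'/L$ from the first step applied at $\sigma_0$. Combining with that first step gives the lemma.

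The main technical obstacle is the shifted contour integral: a crude convexity bound for $\log L$ on $\Re(s)=1/2$ does not by itself deliver the $O(1)$ error, and one must exploit both the GRH-driven analyticity of $\log L$ in $\Re(s)>1/2$ and the rapid decay of the kernel $x^w/w^2$ as $|\Im w|\to\infty$ to control the vertical integration uniformly in $t_j$. Once this is in place, the remaining bookkeeping mirrors Soundararajan's original argument verbatim, and the resulting pointwise bound will be the key input for the fractional-moment estimates in \S\ref{sec:fractional-moment}.
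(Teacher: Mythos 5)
The paper's proof of this lemma is a single citation to Chandee's Theorem~2.1 (a Soundararajan-style GRH upper bound for $\log L(1/2,\pi)$ for general automorphic $L$-functions), so you are re-deriving that result from scratch. Your two-step outline—reduce to $\sigma_0=1/2+1/\log x$ via GRH positivity of $\sum_\rho \Re(s-\rho)^{-1}$, then express the value at $\sigma_0$ via a smoothed Perron kernel—is indeed the shape of Soundararajan's argument, and the first step is fine (modulo the sign typo in $\log L(\sigma_0)-\log L(1/2)=\int_{1/2}^{\sigma_0}(L'/L)$ and the remark that GRH positivity gives a one-sided, not $O$-, bound on $L'/L(\sigma_0)$, which happens to be the side you need).

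The gap is in the second step. You apply the $x^w/w^2$ kernel directly to $\log L(\sigma_0+w,u_j)$ and shift only to $\Re w=-1/\log x$, leaving the residual integral
\[
I=\frac{1}{2\pi i \log x}\int_{(-1/\log x)} \log L(\sigma_0+w,u_j)\,\frac{x^w}{w^2}\,\dd w .
\]
You then wave at ``GRH-driven analyticity and rapid decay of the kernel,'' but this does not produce $O(\log t_j/\log x + 1)$. On that line $|x^w|=e^{-1}$ is constant, $|w|^{-2}\asymp\log^2x$ for $|\Im w|\lesssim 1/\log x$, and the best pointwise GRH input is $|\log L(1/2+iu,u_j)|\ll \log(t_j+|u|+3)$ (with integrable logarithmic spikes at zeros); a direct estimate of $I$ then gives only $O(\log t_j)$, losing the crucial factor $1/\log x$. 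Worse, $\log L(\sigma_0+w)$ has branch points, not poles, at the zeros, so ``shifting further left'' is not a clean residue calculus either. In Soundararajan's and Chandee's actual argument the kernel is applied to $-L'/L$ (simple poles at the zeros), the contour is pushed to $\Re w=-\infty$ picking up explicit residues $\sum_\rho x^{\rho-s}/(\rho-s)^2$, one integrates in $\sigma$ from $\sigma_0$ to $\infty$ to recover $\log L(\sigma_0+it)$, and the zero sum is tamed by the pointwise inequality
\[
\frac{1}{(a-1/2)^2+(\gamma-t)^2}=\frac{1}{a-1/2}\,\Re\frac{1}{(a+it)-\rho},
\]
which, together with $\sum_\rho\Re\frac{1}{(a+it)-\rho}\ll\log(t_j+3)$ and $\int_{1/\log x}^\infty e^{-b\log x}\,\dd b/b=O(1)$, yields the required $O(\log t_j/\log x+1)$. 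That second application of GRH positivity and the $\sigma$-integration of $-L'/L$ are exactly the ingredients your sketch omits, and they are what make the error term correct; simply invoking the decay of $x^w/w^2$ on a fixed vertical line does not. If you want to keep the $\log L$ formulation, you could write $\log L(\sigma_0+w)=\int_0^\infty(-L'/L)(\sigma_0+w+a)\,\dd a$ inside the Mellin integral and exchange orders, but then you have essentially reproduced the $L'/L$ argument rather than avoided it.
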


\begin{proof}
  This follows from \cite[Theorem 2.1]{Chandee}.
\end{proof}

Let $f\in H_{k}$. Let $\sym^2f$ be the symmetric square lift of $f$. Then we have
\[
  L(s,\sym^2f\times u_j) = \sum_{n\geq1} \frac{\lambda_{\sym^2f\times u_j}(n)}{n^s}
  = \prod_{p} \prod_{i_1\in\{0,\pm1\}} \prod_{i_2\in\{\pm1\}}\left(1-\frac{\alpha_f(p)^{2i_1} \alpha_j(p)^{i_2}}{p^s}\right)^{-1},
\]
if $\Re(s)>1$.
Let
\[
  \Lambda_{\sym^2 f \times u_j}(p^c)
  = \sum_{i_1\in\{0,\pm1\}} \sum_{i_2\in\{\pm1\}} \alpha_f(p)^{2ci_1}\alpha_j(p)^{ci_2}  .
\]
Note that $L(1/2,\sym^2 f \times u_j)\geq0$ (see \cite{Lapid}). We will use the following lemma.
\begin{lemma}\label{lemma:logL2}
  Assume GRH. Let $f\in H_k$ and $u_j\in \mathcal{B}$. Assume $t_j\leq k$. Then for $x>10$, we have
  \[
    \log L(1/2,\sym^2 f \times u_j)
    \leq
    \sum_{p^c \leq x} \frac{\Lambda_{\sym^2 f \times u_j}(p^c) }{c p^{c(1/2+\frac{1}{\log x})}} \frac{\log \frac{x}{p^c}}{\log x}
    + O\left( \frac{\log k}{\log x} + 1 \right).
  \]
\end{lemma}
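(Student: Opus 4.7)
The plan is to apply the same framework used to prove Lemma \ref{lemma:logL1}, namely Chandee's generalization \cite[Theorem 2.1]{Chandee} of Soundararajan's \cite{soundararajan2009moments} upper bound for $\log L(1/2, \pi)$ under GRH, to the degree-six Rankin--Selberg $L$-function $L(s, \sym^2 f \times u_j)$ in place of the degree-two $L(s, u_j)$.

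First I would verify the analytic input required by Chandee's theorem. By Gelbart--Jacquet, $\sym^2 f$ is a cuspidal automorphic representation of $\GL(3)$, and $u_j$ is cuspidal on $\GL(2)$, so Rankin--Selberg theory gives that $L(s, \sym^2 f \times u_j)$ is entire, satisfies a functional equation $s \leftrightarrow 1-s$, and has the Euler product written in the text; the nonnegativity of the central value was already noted (Lapid). The archimedean Langlands parameters have size $\asymp k$ (coming from the holomorphic weight via $\sym^2 f$) together with a term of size $\asymp |t_j|$ (from $u_j$), so the assumption $t_j \leq k$ forces the analytic conductor $C$ to satisfy $\log C \ll \log k$.

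With these inputs in hand, Chandee's theorem yields, for $x \geq 10$,
\[
  \log L(1/2, \sym^2 f \times u_j) \leq \Re \sum_{p^c \leq x} \frac{\Lambda_{\sym^2 f \times u_j}(p^c)}{c\, p^{c(1/2 + 1/\log x)}} \frac{\log(x/p^c)}{\log x} + O\!\left(\frac{\log C}{\log x} + 1\right).
\]
Since both $\sym^2 f$ and $u_j$ are self-dual, the sum defining $\Lambda_{\sym^2 f \times u_j}(p^c)$ is invariant under $(i_1, i_2) \mapsto (-i_1, -i_2)$ and is therefore real, so the $\Re$ may be dropped. Substituting $\log C \ll \log k$ gives the claimed bound.

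The main obstacle is purely bookkeeping: one must check that the logarithmic derivative $-L'/L(s, \sym^2 f \times u_j)$ has Dirichlet coefficient $(\log p)\,\Lambda_{\sym^2 f \times u_j}(p^c)/p^{cs}$ at $p^c$ so as to match Chandee's normalization, and that the six archimedean $\Gamma_{\mathbb{R}}$-factors contribute at most $O(\log k)$ to $\log C$ under $t_j \leq k$. Once these are in place, the explicit-formula argument of \cite{soundararajan2009moments, Chandee} transports word-for-word; in particular GRC is not needed for the inequality itself, only GRH for $L(s, \sym^2 f \times u_j)$.
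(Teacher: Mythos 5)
Your proposal is correct and matches the paper's own proof, which simply cites \cite[Theorem 2.1]{Chandee}; you have essentially supplied the routine verifications that the paper leaves implicit (Gelbart--Jacquet giving cuspidality of $\sym^2 f$, the Rankin--Selberg analytic continuation and functional equation, the conductor bound $\log C \ll \log k$ under $t_j \leq k$, and the self-duality making $\Lambda_{\sym^2 f\times u_j}(p^c)$ real). Your remark that GRC is not needed for this particular inequality is also consistent with the paper, which assumes only GRH in the statement of Lemma \ref{lemma:logL2} and invokes GRC only later in the prime-sum estimates of Lemma \ref{lemma:PNT1}.
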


\begin{proof}
  This follows from \cite[Theorem 2.1]{Chandee}.
\end{proof}

In particular, we have
\begin{equation}\label{eqn:Lambda1}
  \Lambda_j(p) = \lambda_j(p), \quad
  \Lambda_j(p^2) = \lambda_j(p^2)-1
\end{equation}
and
\begin{equation}\label{eqn:Lambda2}
  \begin{split}
   \Lambda_{\sym^2 f \times u_j}(p)
  & = \sum_{i_1\in\{0,\pm1\}} \sum_{i_2\in\{\pm1\}} \alpha_f(p)^{2i_1}\alpha_j(p)^{i_2}
  = \lambda_f(p^2)\lambda_j(p), \\
  \Lambda_{\sym^2 f \times u_j}(p^2)
  & = \sum_{i_1\in\{0,\pm1\}} \sum_{i_2\in\{\pm1\}} \alpha_f(p)^{4i_1}\alpha_j(p)^{2i_2}
  = (\lambda_f(p^4)-\lambda_f(p^2)+1) (\lambda_j(p^2)-1).
  \end{split}
\end{equation}

\subsection{Sums over primes}

The following lemmas show that the sums over prime powers above may be restricted just to primes.

\begin{lemma}\label{lemma:PNT1}
  Assume GRH and GRC. Let $f\in H_k$ and $u_j\in \mathcal{B}$, and assume $t_j\leq k$. For $x>10$, we have
  \[
    \sum_{p\leq x} \frac{\lambda_j(p^2)}{p} = O(\log \log \log (t_j+x)) ,
  \]
  \[
    \sum_{p\leq x} \frac{\lambda_f(p^2)\lambda_j(p^2)}{p} = O(\log \log \log (k+x)) ,
  \]
  and
  \[
    \sum_{p\leq x} \frac{\lambda_f(p^4)\lambda_j(p^2)}{p} = O(\log \log \log (k+x)).
  \]
\end{lemma}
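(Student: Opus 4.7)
The plan is to recognize each of the three sums as a prime-indexed partial sum of the Dirichlet coefficients of a suitable Rankin--Selberg $L$-function, and then invoke the standard GRH estimate that under GRH (with GRC used to dispose of prime-power contributions) such partial sums are $O(\log\log\log C)$, where $C$ is the analytic conductor of the $L$-function in question.

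For the first step, the $\GL(2)$ Hecke relations together with multiplicativity of Satake parameters give
\[
\lambda_j(p^2) = \lambda_{\sym^2 u_j}(p), \quad \lambda_f(p^2)\lambda_j(p^2) = \lambda_{\sym^2 f \otimes \sym^2 u_j}(p), \quad \lambda_f(p^4)\lambda_j(p^2) = \lambda_{\sym^4 f \otimes \sym^2 u_j}(p).
\]
The automorphy of $\sym^2 u_j$, $\sym^2 f$, and $\sym^4 f$ (by Gelbart--Jacquet and Kim respectively) ensures that all three $L$-functions on the right have entire completions, functional equations, and Euler products, with analytic conductors bounded polynomially in $(1+|t_j|)$ for the first and in $k(1+|t_j|)$ for the other two.

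For the second step, I would apply the explicit formula for each $L(s,\Pi)$ to a smooth cutoff of $\sum_n \Lambda_\Pi(n) n^{-1}$. Under GRH the zero-contribution is $O(\log\log C)$; partial summation converts to the target sum $\sum_{p\leq x}\lambda_\Pi(p)/p$, and balancing the cutoff parameter (as in \cite[Theorem 2.1]{Chandee}) yields the bound $O(\log\log\log(C(\Pi)+x))$. GRC enters here to guarantee rapid decay of the $\Lambda_\Pi(p^k)$ contribution for $k\geq 2$, so that the passage between $\Lambda_\Pi$-sums and $\lambda_\Pi$-sums is cost-free. Since $\log\log\log C \ll \log\log\log(k+|t_j|)$ for these $L$-functions and $t_j\leq k$ by hypothesis, each of the three stated estimates follows.

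The main obstacle is the justification of the partial-sum bound for the higher-rank tensor products $\sym^2 f \otimes \sym^2 u_j$ (a $\GL(9)$ object) and $\sym^4 f \otimes \sym^2 u_j$ (a $\GL(15)$ object), whose full automorphy is not known. However, the Rankin--Selberg theory of Jacquet--Piatetski-Shapiro--Shalika supplies exactly the analytic inputs (meromorphic continuation, functional equation, polynomial vertical growth, and local Euler factors compatible with Ramanujan) needed to run the Chandee-type explicit-formula argument. Thus the passage from GRH for each individual factor to the desired partial-sum bound goes through, and the lemma follows.
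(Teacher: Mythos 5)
Your proposal is correct and follows essentially the same route the paper implicitly relies on: the paper's proof consists only of the remark that ``the proofs are standard'' with citations to \cite[Lemma 6.5]{HuangLester} and \cite[Lemma 4.3]{HHL}, and what you have written is a faithful reconstruction of that standard argument (identify $\lambda_j(p^2)=\lambda_{\sym^2 u_j}(p)$, $\lambda_f(p^2)=\lambda_{\sym^2 f}(p)$, $\lambda_f(p^4)=\lambda_{\sym^4 f}(p)$, pass to the Rankin--Selberg $L$-functions $L(s,\sym^2 u_j)$, $L(s,\sym^2 f\times\sym^2 u_j)$, $L(s,\sym^4 f\times\sym^2 u_j)$ --- none of which has a pole at $s=1$ since $f$ is holomorphic and $u_j$ is Maass --- and then run a Chandee-type explicit-formula/prime-sum estimate under GRH, with Deligne and GRC supplying the Ramanujan input needed to control prime powers). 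One small point of precision worth flagging: the automorphy of $\sym^2 u_j$, $\sym^2 f$, and $\sym^4 f$ (Gelbart--Jacquet, Kim--Shahidi) already makes the relevant Rankin--Selberg $L$-functions into genuine $\GL(3)\times\GL(3)$ and $\GL(5)\times\GL(3)$ Rankin--Selberg convolutions with all the needed analytic properties, so your caveat about the full automorphy of the $\GL(9)$ and $\GL(15)$ tensor products is not actually an obstacle and requires no further workaround.
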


\begin{lemma}\label{lemma:PNT2}
  Assume GRH. Let $f\in H_k$ and $g\in H_\ell$, and $f\neq g$ if $k=\ell$.  Assume $ \ell^{1/2}\leq k\leq \ell$. For $x>10$, we have
  \[
    \sum_{p\leq x} \frac{\lambda_f(p^2)}{p} = O(\log \log \log (k+x)) ,
  \]
  \[
    \sum_{p\leq x} \frac{\lambda_f(p^4)}{p} = O(\log \log \log (k+x)) ,
  \]
  and
  \[
    \sum_{p\leq x} \frac{\lambda_{f}(p^2)\lambda_{g}(p^2)}{p} = O(\log \log \log (k+x)) .
  \]
\end{lemma}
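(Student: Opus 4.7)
The strategy is to express each prime sum as (approximately) $\log L(1,\pi)$ for a suitable automorphic $L$-function $\pi$ that is entire and non-vanishing at $s=1$, and then invoke GRH-based bounds. Using the Hecke relation $\lambda_f(p^n)=\sum_{j=0}^{n}\alpha_f(p)^{n-2j}$, I would identify
\[
\lambda_f(p^2)=\lambda_{\sym^2 f}(p),\qquad \lambda_f(p^4)=\lambda_{\sym^4 f}(p),\qquad \lambda_f(p^2)\lambda_g(p^2)=\lambda_{\sym^2 f\times \sym^2 g}(p).
\]
Here $\sym^2 f$ is Gelbart--Jacquet cuspidal on $\GL_3$, $\sym^4 f$ is Kim cuspidal on $\GL_5$ (since level-one cusp forms are not dihedral, tetrahedral, or octahedral), and $L(s,\sym^2 f\times\sym^2 g)$ on $\GL_9$ is entire because $\sym^2 f\not\cong\sym^2 g$: the assumption $f\neq g$ when $k=\ell$, together with the mismatch of archimedean $L$-factors when $k\neq\ell$, forces this by strong multiplicity one.

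For each such $\pi$, the Euler product yields $\log L(s,\pi)=\sum_p \lambda_\pi(p)/p^s+R_\pi(s)$, where $R_\pi$ collects higher prime powers and converges absolutely for $\Re s>1/2$ by Deligne's bound $|\alpha_f(p)|=|\alpha_g(p)|=1$, which is unconditional for holomorphic cusp forms. Under GRH for $L(s,\pi)$, a Selberg-type approximation analogous to \cite[Theorem 2.1]{Chandee} but at $s=1$ (which is easier, being off the critical line, and proved by the same contour shift) gives
\[
\sum_{p\leq x}\frac{\lambda_\pi(p)}{p}=\log L(1,\pi)+O\!\left(\frac{\log c(\pi)}{\log x}+1\right),
\]
with $c(\pi)$ the analytic conductor. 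Combined with the Littlewood-style GRH bound $L(1,\pi)^{\pm 1}\ll (\log\log c(\pi))^{O(1)}$, this yields $|\log L(1,\pi)|\ll \log\log\log c(\pi)$, and the hypothesis $\ell^{1/2}\leq k\leq\ell$ ensures $\log c(\pi)\ll \log(k\ell)\ll \log k$ in all three cases.

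I would then split on the size of $x$. For $x\geq \exp(\sqrt{\log k})$, the error $\log c(\pi)/\log x$ is $O(1)$ and the display above delivers $\sum_{p\leq x}\lambda_\pi(p)/p = O(\log\log\log(k+x))$. For $10<x<\exp(\sqrt{\log k})$, the trivial bound from Deligne $|\lambda_\pi(p)|\leq \deg\pi\leq 9$ gives $\sum_{p\leq x}|\lambda_\pi(p)|/p\ll \log\log x+1$, and $\log\log x\ll \log\log\log k$ in this range, so the claim still holds.

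Main obstacle: making the Selberg-type approximation at $s=1$ work uniformly in $\pi$, with error $O(\log c(\pi)/\log x+1)$, for the degree-$9$ Rankin-Selberg $L(s,\sym^2 f\times \sym^2 g)$, whose analytic conductor grows in both $k$ and $\ell$. This is precisely where the assumption $\ell^{1/2}\leq k$ is needed, so that $\log c(\sym^2 f\times\sym^2 g)\asymp \log(k\ell)$ is controlled by $\log k$, keeping the final bound at the scale $\log\log\log(k+x)$ rather than $\log\log\log\ell$. A secondary point is the cuspidality of $\sym^4 f$, needed to rule out a pole at $s=1$ in an isobaric decomposition; this is Kim--Shahidi applied to any level-one cusp form.
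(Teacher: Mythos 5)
The plan is the right one and matches the standard approach (identify each prime sum with $\sum_{p\le x}\lambda_\pi(p)/p$ for a suitable entire automorphic $L$-function $\pi$, then compare to $\log L(1,\pi)$ and invoke the Littlewood-type GRH bound $\log L(1,\pi)=O(\log\log\log c(\pi))$). Your identification of the relevant $L$-functions, the remarks on cuspidality of $\sym^4 f$ and on $\sym^2 f\not\cong\sym^2 g$ making the degree-$9$ Rankin--Selberg $L$-function entire, and the reduction of $c(\pi)$ to powers of $k$ using $\ell^{1/2}\le k\le\ell$, are all correct.

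However, the range-splitting at $x=\exp(\sqrt{\log k})$ is wrong in both directions, and this is a genuine gap. For $x\ge\exp(\sqrt{\log k})$, the claimed error $\log c(\pi)/\log x$ is of size $\log k/\sqrt{\log k}=\sqrt{\log k}$, which is not $O(1)$ and is far larger than the target $\log\log\log(k+x)$. For $10<x<\exp(\sqrt{\log k})$, the trivial bound gives $\log\log x$, which for, say, $x=\exp((\log k)^{0.4})$ is $\asymp\log\log k$, again far larger than $\log\log\log k$; the asserted inequality $\log\log x\ll\log\log\log k$ is false on most of this range. So with the error term you quoted, no choice of threshold makes the two halves meet.

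The fix is twofold. First, the correct threshold for the trivial bound is $x\le(\log c(\pi))^{C}$ for a fixed constant $C$, since then $\log\log x\le\log(C\log\log c(\pi))\ll\log\log\log c(\pi)$. Second, for $x>(\log c(\pi))^{C}$ one should not carry the weak error $\log c(\pi)/\log x$ from the Chandee-type inequality (that error is tailored to the central point $s=1/2$, where zeros can be arbitrarily close); at $s=1$, under GRH and Ramanujan for $\pi$, a direct partial-summation argument using $\sum_{p\le t}\lambda_\pi(p)\log p\ll\sqrt{t}\log^{2}(tc(\pi))$ shows that
\[
\sum_{p>x}\frac{\lambda_\pi(p)}{p}\;\ll\;\frac{(\log c(\pi))^{2}}{\sqrt{x}\,\log x}\;=\;O(1)\qquad\text{for }x\ge(\log c(\pi))^{5},
\]
so that $\sum_{p\le x}\lambda_\pi(p)/p=\log L(1,\pi)+O(1)$ with an absolute $O(1)$, not $O(\log c(\pi)/\log x)$. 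With these two corrections the argument closes as you intended. (The paper itself gives no proof here, referring to Huang--Lester Lemma~6.5 and Hua--Huang--Li Lemma~4.3, which follow essentially this corrected route.)
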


The proofs of the above lemmas are standard. See e.g. \cite[Lemma 6.5]{HuangLester} and \cite[Lemma 4.3]{HHL}.

\section{Joint mass distribution: Proof of Theorem \ref{thm:221}}\label{sec:joint-mass}

Assume $k\leq \ell$.
Let $u_0=\vol(\Gamma\backslash\mathbb{H})^{-1/2}$,
$\{u_j\}_{j\geq1}$ be an $L^2$-normalized Hecke basis of the space of cuspidal Maass forms,
and $E_t(z)=E(z,1/2+it)$ the Eisenstein series.
By Parseval's identity, we have
\begin{equation}\label{eqn:period2moment}
  \langle |F|^2, |G|^2 \rangle
  = \langle |F|^2, u_0 \rangle \langle u_0, |G|^2 \rangle
  + \sum_{j\geq1} \langle |F|^2, u_j \rangle \langle u_j, |G|^2 \rangle
  + \frac{1}{4\pi} \int_{\mathbb{R}} \langle |F|^2, E_t \rangle \langle E_t, |G|^2 \rangle \dd t.
\end{equation}
The constant term contribution is
\begin{equation}\label{eqn:constantterm}
  \langle |F|^2, u_0 \rangle \langle u_0, |G|^2 \rangle = \frac{1}{\vol(\Gamma\backslash\mathbb{H})} \|f\|_2^2 \|g\|_2^2 = \vol(\Gamma\backslash\mathbb{H}).
\end{equation}

\subsection{Some calculations of Gamma functions}

In order the deal with the contribution from the cusp forms and the Eisenstein series, we will need to prove upper bounds for $|\Gamma(k-1/2+it)|/\Gamma(k)$. This is standard but a little tricky since both $k$ and $t$ can be large. So we give some details.
By Stirling's formula, we have
\begin{align*}
  \log |\Gamma(k-1/2+it)|^2 & = 2\Re (k-1/2+it)\log (k-1/2+it) \\
  & \hskip 3cm -(2k-1)- \log |k-1/2+it| + O(1) \\
  & = (k-1) \log ((k-1/2)^2+t^2) - 2t \theta -2k+ O(1).
\end{align*}
where $\theta={\rm Arg}(k-1/2+it)\in(-\pi/2,\pi/2)$.
Note that $\log \Gamma(k)^2 = 2 k\log k-2k - \log k + O(1)$. We have
\begin{align*}
  \log \frac{|\Gamma(k-1/2+it)|^2}{\Gamma(k)^2} &
   =  (k-1) \log ((k-1/2)^2+t^2) - 2t \theta  - 2k\log k + \log k + O(1).
\end{align*}
Let $t=\alpha (k-1/2)$, and assume $\alpha\geq0$. Then we have
\begin{align}
  \log \frac{|\Gamma(k-1/2+it)|^2}{\Gamma(k)^2}
   & = -\log k + (k-1) \log (1+\alpha^2) - 2\alpha k {\rm Arctan} (\alpha )  + O(1) \label{eqn:Stirling1} \\
   & \leq \Big(\frac{\log (1+\alpha^2)}{\alpha} - {\rm Arctan} (\alpha )\Big) t - {\rm Arctan} (\alpha )t  + O(1)  \label{eqn:Stirling2}.
\end{align}

We first treat the case when $t$ is very large.
If $\alpha\geq 100$, then we have $\theta={\rm Arctan} (\alpha )\geq 1.56$ and
$\frac{\log (1+\alpha^2)}{\alpha} - {\rm Arctan} (\alpha )<0$.
So by \eqref{eqn:Stirling2}, for $|t|\geq 100 k$ we have
\begin{equation}\label{eqn:Gamma-t}
  \frac{|\Gamma(k-1/2+it)|}{\Gamma(k)}  \ll \exp(-|t|/2).
\end{equation}

Define $\phi(\alpha) = \log (1+\alpha^2) - 2\alpha  {\rm Arctan} (\alpha )$.
Since $\phi'(\alpha) = - 2 {\rm Arctan} (\alpha )\leq 0$ for $\alpha\geq0$,
we have $\phi(\alpha)\leq \phi(\alpha_0)\leq \phi(0)=0$ for any $\alpha\geq \alpha_0\geq0$.
Taking $\alpha_0= k^{-1/3}$, we get
\begin{align} \label{eqn:Gamma-large}
  \frac{|\Gamma(k-1/2+it)|}{\Gamma(k)}  \ll \exp(-k^{1/3}/4),
\end{align}
for all $|t|\geq k^{2/3}$.

For $|t|\leq k^{2/3}$, by \eqref{eqn:Stirling1} we obtain
\[
    \log \frac{|\Gamma(k-1/2+it)|^2}{\Gamma(k)^2}
   = -\log k -\alpha^2 k   + O(1)
   = -t^2/k - \log k + O(1).
\]
Hence we have
\begin{equation} \label{eqn:Gamma-small}
  \frac{|\Gamma(k-1/2+it)|}{\Gamma(k)}
   \ll \exp(-t^2/(2k)) \frac{1}{\sqrt{k}}.
\end{equation}
By \eqref{eqn:Gamma-large} and \eqref{eqn:Gamma-small} we know
\begin{equation}\label{eqn:Gamma-all}
  \frac{|\Gamma(k-1/2+it)|}{\Gamma(k)}
   \ll \frac{1}{\sqrt{k}},
\end{equation}
for all $t\in \mathbb{R}$.

\subsection{The Eisenstein series contribution}

Next, we estimate the contribution from the Eisenstein series.
By the Rankin--Selberg method, we have
\begin{equation*}
  |\langle |F|^2, E_t \rangle| \ll
  \frac{|\zeta(1/2+it) L(1/2+it,\sym^2 f)|} {L(1,\sym^2 f)  |\zeta(1+2it)| } \frac{|\Gamma(k-1/2+it)|}{\Gamma(k)}.
\end{equation*}
So we have
\begin{multline}\label{eqn:RS}
  \frac{1}{4\pi} \int_{\mathbb{R}} \langle |F|^2, E_t \rangle \langle E_t, |G|^2 \rangle \dd t
  \ll \int_{\mathbb{R}}
  \frac{|\zeta(1/2+it)|^2 |L(1/2+it,\sym^2 f)L(1/2+it,\sym^2 g)|} {L(1,\sym^2 f)L(1,\sym^2 g)  |\zeta(1+2it)|^2 }
  \\ \cdot
  \frac{|\Gamma(k-1/2+it)|}{\Gamma(k)} \frac{|\Gamma(\ell-1/2+it)|}{\Gamma(\ell)}
  \dd t.
\end{multline}

\subsubsection{The case $k\leq \ell^\varepsilon$}

By \eqref{eqn:RS}, \eqref{eqn:Gamma-t}, and \eqref{eqn:Gamma-all}, we have
\begin{multline*}
  \frac{1}{4\pi} \int_{\mathbb{R}} \langle |F|^2, E_t \rangle \langle E_t, |G|^2 \rangle \dd t \\
  \ll
  \frac{1}{\sqrt{k\ell}} \int_{|t|\leq 100 k}
  \frac{|\zeta(1/2+it)|^2 |L(1/2+it,\sym^2 f)L(1/2+it,\sym^2 g)|} {L(1,\sym^2 f)L(1,\sym^2 g)  |\zeta(1+2it)|^2 } \dd t
  \\
  + \frac{1}{\sqrt{k\ell}} \int_{100 k \leq |t|\leq \ell^{1/2+\varepsilon}}
  \frac{|\zeta(1/2+it)|^2 |L(1/2+it,\sym^2 f)L(1/2+it,\sym^2 g)|} {L(1,\sym^2 f)L(1,\sym^2 g)  |\zeta(1+2it)|^2 } e^{-|t|/2} \dd t
  \\
  + O(\ell^{-2024}).
\end{multline*}
Assume GRH. Then for $|t|\leq \ell^{1/2+\varepsilon}$ we have
\[
  \frac{|\zeta(1/2+it)|^2 |L(1/2+it,\sym^2 f)L(1/2+it,\sym^2 g)|} {L(1,\sym^2 f)L(1,\sym^2 g)  |\zeta(1+2it)|^2 }
  \ll \ell^\varepsilon.
\]
Hence we get
\begin{equation}\label{eqn:ES<<1}
  \frac{1}{4\pi} \int_{\mathbb{R}} \langle |F|^2, E_t \rangle \langle E_t, |G|^2 \rangle \dd t
  \ll \ell^{-1/2+\varepsilon}k +
  \frac{\ell^\varepsilon}{\sqrt{k\ell}} \int_{|t|\leq \ell^{1/2+\varepsilon}} e^{-|t|/2} \dd t
  + O(\ell^{-2024})
  \ll \ell^{-1/2+\varepsilon}.
\end{equation}

\subsubsection{The case $\ell^\varepsilon \leq k\leq \ell$}

By \eqref{eqn:RS}, \eqref{eqn:Gamma-large}, and \eqref{eqn:Gamma-small}, we have
\begin{multline*}
  \frac{1}{4\pi} \int_{\mathbb{R}} \langle |F|^2, E_t \rangle \langle E_t, |G|^2 \rangle \dd t \\
  \ll
  \frac{1}{\sqrt{k\ell}} \int_{|t|\leq k^{1/2+\varepsilon}}
  \frac{|\zeta(1/2+it)|^2 |L(1/2+it,\sym^2 f)L(1/2+it,\sym^2 g)|} {L(1,\sym^2 f)L(1,\sym^2 g)  |\zeta(1+2it)|^2 } \dd t
  \\
  + O(\ell^{-2024}).
\end{multline*}
Hence we get
\begin{equation}\label{eqn:ES<<2}
  \frac{1}{4\pi} \int_{\mathbb{R}} \langle |F|^2, E_t \rangle \langle E_t, |G|^2 \rangle \dd t
  \ll
  \frac{\ell^\varepsilon}{\sqrt{k\ell}} \int_{|t|\leq k^{1/2+\varepsilon}}   \dd t
  + O(\ell^{-2024})
  \ll \ell^{-1/2+\varepsilon}.
\end{equation}

\subsection{The cusp form contribution}
Finally, we need to bound the contribution from the cusp forms. For $u_j$ odd, we have $\langle |F|^2,u_j\rangle=0$.
For $u_j$ even, by Watson's formula  we have
\begin{align*}
  |\langle |F|^2, u_j\rangle|^2 & \ll \frac{\Lambda(1/2,u_j) \Lambda(1/2,\sym^2 f\times u_j)}{\Lambda(1,\sym^2 f)^2\Lambda(1,\sym^2 u_j)} \\
  & \ll \frac{L(1/2,u_j) L(1/2,\sym^2 f\times u_j)}{L(1,\sym^2 f)^2 L(1,\sym^2 u_j)} \frac{|\Gamma(k-1/2+it_j)|^2}{\Gamma(k)^2}.
\end{align*}
So we get
\begin{multline*}
  \sum_{j\geq1} \langle |F|^2, u_j \rangle \langle u_j, |G|^2 \rangle
  \ll  {\sum_{j\geq1}}^{\rm even}
  \frac{L(1/2,u_j) L(1/2,\sym^2 f\times u_j)^{1/2}L(1/2,\sym^2 g\times u_j)^{1/2}}{L(1,\sym^2 f)L(1,\sym^2 g) L(1,\sym^2 u_j)}
  \\ \cdot \frac{|\Gamma(k-1/2+it_j)|}{\Gamma(k)} \frac{|\Gamma(\ell-1/2+it_j)|}{\Gamma(\ell)}.
\end{multline*}

\subsubsection{The case $k\leq \ell^\varepsilon$}

Assume $k\leq \ell^{\varepsilon}$.  Then by \eqref{eqn:Gamma-t} and \eqref{eqn:Gamma-all} we have
\begin{multline*}
  \sum_{j\geq1} \langle |F|^2, u_j \rangle \langle u_j, |G|^2 \rangle \\
  \ll \frac{1}{\sqrt{k\ell}} \sum_{t_j\leq 100 k \ell^\varepsilon}
  \frac{L(1/2,u_j) L(1/2,\sym^2 f\times u_j)^{1/2}L(1/2,\sym^2 g\times u_j)^{1/2}}{L(1,\sym^2 f)L(1,\sym^2 g) L(1,\sym^2 u_j)} \\
  + \frac{1}{\sqrt{\ell}} \sum_{100 k \ell^\varepsilon \leq t_j\leq \ell^{1/2+\varepsilon}}
  \frac{L(1/2,u_j) L(1/2,\sym^2 f\times u_j)^{1/2}L(1/2,\sym^2 g\times u_j)^{1/2}}{L(1,\sym^2 f)L(1,\sym^2 g) L(1,\sym^2 u_j)} \exp\left(-\frac{|t_j|}{2}\right) \\
  + O(\ell^{-2024}).
\end{multline*}
Under GRH, for $t_j\leq \ell^{1/2+\varepsilon}$ we know
\[
  \frac{L(1/2,u_j) L(1/2,\sym^2 f\times u_j)^{1/2}L(1/2,\sym^2 g\times u_j)^{1/2}}{L(1,\sym^2 f)L(1,\sym^2 g) L(1,\sym^2 u_j)}
  \ll \ell^\varepsilon.
\]
Hence we get
\begin{equation*}
  \sum_{j\geq1} \langle |F|^2, u_j \rangle \langle u_j, |G|^2 \rangle
  \ll \ell^{-1/2+\varepsilon}.
\end{equation*}

\subsubsection{The case $\ell^\varepsilon \leq k\leq \ell^{1-\varepsilon}$}

By \eqref{eqn:Gamma-large} and \eqref{eqn:Gamma-small}, we have
\begin{multline}\label{eqn:cusp<<}
  \sum_{j\geq1} \langle |F|^2, u_j \rangle \langle u_j, |G|^2 \rangle \\
  \ll \frac{1}{\sqrt{k\ell}} \;\ \  \sideset{}{^{\rm even}}\sum_{t_j\leq k^{1/2+\varepsilon}}
  \frac{L(1/2,u_j) L(1/2,\sym^2 f\times u_j)^{1/2}L(1/2,\sym^2 g\times u_j)^{1/2}}{L(1,\sym^2 f)L(1,\sym^2 g) L(1,\sym^2 u_j)} \exp\left(-\frac{t_j^2}{2k}\right) \\
  + O(\ell^{-2024}).
\end{multline}
Assume GRH. Then we have
\begin{equation*}
  \sum_{j\geq1} \langle |F|^2, u_j \rangle \langle u_j, |G|^2 \rangle
  \ll \frac{\ell^\varepsilon}{\sqrt{k\ell}} \sum_{t_j\leq k^{1/2+\varepsilon}} 1 + O(\ell^{-2024})
  \ll k^{1/2+\varepsilon/4}\ell^{-1/2} \ll \ell^{-\varepsilon}.
\end{equation*}

\subsubsection{The case $\ell^{1-\varepsilon} \leq k\leq \ell$}

We now deal with this most complicated case.
The inequality \eqref{eqn:cusp<<} still holds. But a trivial estimate will not suffice.
We will need the following proposition, the proof of which will be given in next section.

\begin{proposition}\label{prop:u_j}
  Assume GRH and GRC.  Let $T$ be a large number.  Let $k,\ell$ be even integers with $T^{3/2} \leq k\leq T^{100}$ and $T^{3/2} \leq \ell\leq T^{100}$.  Let $f\in H_k$ and $g\in H_\ell$, and $f\neq g$ if $k=\ell$. Then we have
  \begin{multline*}
    {\sum_{j\geq1}}^{\rm even}
  \frac{L(1/2,u_j)^{l_1} L(1/2,\sym^2 f\times u_j)^{l_2}L(1/2,\sym^2 g\times u_j)^{l_3}}{L(1,\sym^2 u_j)} \exp\left(-\frac{t_j^2}{T^2}\right) \\
  \ll T^2 (\log T)^{\sum_{i=1}^{3} \frac{l_i(l_i-1)}{2}+\varepsilon}, 
  \end{multline*}
   where the implied constant may depend on $l_1,l_2,l_3$ and $\varepsilon$.
\end{proposition}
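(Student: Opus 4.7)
The plan is to adapt Soundararajan's moment method under GRH \cite{soundararajan2009moments}, following the template of \cite[\S4]{HHL} and \cite[\S6]{HuangLester}. As a first step, I fix a truncation parameter $x = T^{1/B}$ with a constant $B$ to be chosen, and combine Lemmas \ref{lemma:logL1} and \ref{lemma:logL2} with \eqref{eqn:Lambda1}, \eqref{eqn:Lambda2} and the prime-sum bounds of Lemmas \ref{lemma:PNT1}, \ref{lemma:PNT2} (GRC is needed precisely here, to absorb the $c\geq 2$ prime-power contributions into an error of $O(\log\log\log T)$). The upshot is
\[
l_1 \log L(1/2,u_j) + l_2 \log L(1/2,\sym^2 f\times u_j) + l_3 \log L(1/2,\sym^2 g\times u_j) \leq \mathcal{P}(u_j; x) + O(1),
\]
where $\mathcal{P}(u_j; x) = \sum_{p\leq x} \frac{c_p\, \lambda_j(p)}{p^{1/2+1/\log x}} \cdot \frac{\log(x/p)}{\log x}$ with $c_p = l_1 + l_2 \lambda_f(p^2) + l_3 \lambda_g(p^2)$, so the problem reduces to bounding the exponential moment of $\mathcal{P}$ against the weighted family ${\sum_j}^{\mathrm{even}} \frac{\cdot}{L(1,\sym^2 u_j)}\, e^{-t_j^2/T^2}$.

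The key technical step is a high-moment estimate via Kuznetsov. For each integer $n \geq 1$ with $x^{2n} \leq T^{2-\varepsilon}$ (so $n \leq n_{\max} \asymp B$), I expand $\mathcal{P}(u_j;x)^{2n}$ into a sum over $2n$-tuples of primes, use Hecke multiplicativity $\lambda_j(m)\lambda_j(n)=\sum_{d\mid(m,n)}\lambda_j(mn/d^2)$ to reduce products $\prod_i \lambda_j(p_i)$ to linear combinations $\sum_m \alpha(m)\lambda_j(m)$, and apply Lemma \ref{lemma:KTF}. The diagonal (from the $(2n-1)!!$ pairings of primes) yields the Wick-type bound
\[
{\sum_j}^{\mathrm{even}} \frac{\mathcal{P}(u_j; x)^{2n}}{L(1,\sym^2 u_j)}\, e^{-t_j^2/T^2} \ll T^2 \cdot (2n-1)!! \cdot \sigma(x)^{2n} + O(T^{1+\varepsilon} x^{2n}),
\]
where, upon expanding $c_p^2$, using the Hecke relation $\lambda_h(p^2)^2 = \lambda_h(p^4)+\lambda_h(p^2)+1$, and invoking Lemmas \ref{lemma:PNT1}, \ref{lemma:PNT2} together with Mertens,
\[
\sigma(x)^2 := \sum_{p\leq x} \frac{c_p^2}{p}\, w_x(p)^2 = (l_1^2 + l_2^2 + l_3^2)\log\log x + O(\log\log\log T).
\]

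Applying Markov's inequality to this moment yields, for $V$ in the moderate range, the tail bound ${\sum_{j:\mathcal{P}(u_j;x)>V}}^{\mathrm{even}} \frac{e^{-t_j^2/T^2}}{L(1,\sym^2 u_j)} \ll T^{2+\varepsilon}\exp(-V^2/(2\sigma(x)^2))$ (choose $n \approx V^2/(2\sigma(x)^2)$), with superpolynomial decay for $V$ beyond $\sigma(x)\sqrt{2n_{\max}}$. Writing the LHS of Proposition \ref{prop:u_j} as a Stieltjes integral of $e^V$ against this tail measure, the peak at $V\approx \sigma(x)^2$ produces a preliminary bound $T^{2+\varepsilon}(\log T)^{(l_1^2+l_2^2+l_3^2)/2}$.

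The main obstacle is to sharpen this preliminary exponent $(l_1^2+l_2^2+l_3^2)/2$ to the claimed $\sum_i l_i(l_i-1)/2$, saving a factor of $(l_1+l_2+l_3)/2$ in the power of $\log T$. This reflects the orthogonal symmetry of the Hecke--Maass family (indeed, $L(1/2,u_j)\geq 0$ by Katok--Sarnak and $L(1/2,\sym^2 h\times u_j)\geq 0$ by Lapid), and is obtained by applying Lemmas \ref{lemma:logL1}, \ref{lemma:logL2} to the shifted $L$-ratios $L_i^{l_i}/L(1,\sym^2 u_j)^{l_i/2}$ (or an analogous twist), whose pointwise log-bound absorbs the ``self-variance'' contribution $(l_i/2)\sum_{p\leq x} \lambda_j(p)^2/p = (l_i/2)\log\log x + O(\log\log\log T)$ into the effective exponent. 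Executing this refinement in the three-factor setting as in \cite[\S4]{HHL}, and finally choosing $B=B(\varepsilon)$ large, delivers the claimed bound $T^2 (\log T)^{\sum_i l_i(l_i-1)/2+\varepsilon}$.
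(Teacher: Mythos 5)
Your framework---bound the relevant $\log$'s of central $L$-values by a short Dirichlet polynomial via Chandee's theorem, compute even moments against the harmonic weight via Kuznetsov (Lemmas~\ref{lemma:KTF} and \ref{lemma:average}), deduce a Gaussian-type tail bound, and integrate $e^V$ against it---is exactly the paper's approach, and your identifications of the Dirichlet polynomial $\mathcal{P}$, of $\sigma(x)^2=(l_1^2+l_2^2+l_3^2)\log\log x$, and of the fact that a factor $(\log T)^{(l_1+l_2+l_3)/2}$ must be saved for orthogonal-symmetry reasons are all correct. But there is a genuine gap in how you obtain that saving, and as stated your argument would not close.

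Your opening inequality $\log\big(L(1/2,u_j)^{l_1}L(1/2,\sym^2 f\times u_j)^{l_2}L(1/2,\sym^2 g\times u_j)^{l_3}\big)\leq\mathcal{P}(u_j;x)+O(1)$ is false. The correct statement, and what the paper proves (see \eqref{eqn:logL-sums}), is
\[
\log\big(L(1/2,u_j)^{l_1}L(1/2,\sym^2 f\times u_j)^{l_2}L(1/2,\sym^2 g\times u_j)^{l_3}\big)\leq\mathcal{P}(u_j;x,x)-\tfrac12(l_1+l_2+l_3)\log\log x+O(\log\log\log T).
\]
The term $-\tfrac12(l_1+l_2+l_3)\log\log x$ is \emph{not} an $O(1)$ error; it is the entire saving. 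It comes from the $b=2$ (resp.\ $c=2$) terms of Lemmas~\ref{lemma:logL1}, \ref{lemma:logL2}: by \eqref{eqn:Lambda1}, $\Lambda_j(p^2)=\lambda_j(p^2)-1$, so $\tfrac12\sum_{p\leq\sqrt x}\tfrac{\lambda_j(p^2)-1}{p}(\cdots)=-\tfrac12\log\log x+O(\log\log\log T)$, the $\lambda_j(p^2)$ piece averaging away by Lemma~\ref{lemma:PNT1} (this, not the higher prime powers alone, is where GRC does its real work) and the $-1$ piece being Mertens; the analogous $-\tfrac12\log\log x$ for each of the $\sym^2 f$-- and $\sym^2 g$--twists follows from \eqref{eqn:Lambda2} and Lemmas~\ref{lemma:PNT1}, \ref{lemma:PNT2}. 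Once this shift $\mu(T)=(-\tfrac12+\varepsilon)(l_1+l_2+l_3)\log\log T$ is carried along, your own Gaussian-peak computation already delivers $e^{\mu(T)+\sigma(T)^2/2}=(\log T)^{\sum_i l_i(l_i-1)/2+O(\varepsilon)}$; no separate ``sharpening'' step exists or is needed.

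Consequently the mechanism you propose for the sharpening is incorrect and should be discarded: dividing by $L(1,\sym^2 u_j)^{l_i/2}$ cannot absorb a factor $(\log T)^{l_i/2}$, because under GRH one has $L(1,\sym^2 u_j)=(\log\log T)^{O(1)}$, so $\log L(1,\sym^2 u_j)^{l_i/2}=O(\log\log\log T)$, a triply logarithmic perturbation. The orthogonal-symmetry shift must be read off the prime-square part of the explicit formula, not off the harmonic weight. Finally, a detail you wave at (``superpolynomial decay beyond $\sigma\sqrt{2n_{\max}}$'') needs care: for $V$ close to $C\log T/\log\log T$ the constraint $x^{O(r)}\leq T^{2-\varepsilon}$ forces $x$ and $r$ to shrink with $V$; the paper handles this by taking $x=T^{1/(\epsilon V)}$ and splitting the Dirichlet polynomial at $z=x^{1/\log\log T}$, treating the long and short pieces separately as in Lemma~\ref{lemma:A<<}.
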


Under GRH, for $f\in H_k$ we have $(\log\log k)^{-1} \ll  L(1, \sym^2 f) \ll (\log\log k)^{3}$ (see \cite[Theorem 3]{LW}).
  By \eqref{eqn:cusp<<} and Proposition \ref{prop:u_j} with $l_1=1$, $l_2=l_3=1/2$, and $T^2=2k$, we get
  \begin{equation*}
    \sum_{j\geq1} \langle |F|^2, u_j \rangle \langle u_j, |G|^2 \rangle  
    \ll \frac{1}{\sqrt{k\ell}} \cdot k (\log k)^{-1/4+\varepsilon}
     \ll \sqrt{\frac{k}{\ell}} (\log k)^{-1/4+\varepsilon}.
  \end{equation*}
  This completes the proof of Theorem \ref{thm:221}.

\section{Fractional moments of $L$-functions}\label{sec:fractional-moment}

In this section, we will prove Proposition \ref{prop:u_j} by using Soundararajan's method \cite{soundararajan2009moments}. See also \cite[\S6]{HuangLester} and \cite{HHL}.

\subsection{A mean value estimate}

\begin{lemma}\label{lemma:average}
  Let $r\in \mathbb{N}$. Then for $x\leq T^{1/(10 r)}$ and real numbers $a(p)$ with $a(p)\ll p^{1/2}$, we have that
  \[
    \frac{1}{T^2} \sum_{j\geq1} \frac{1}{L(1,\sym^2 u_j)} \left( \sum_{p\leq x} \frac{a(p) \lambda_j(p)}{p^{1/2}} \right)^{2r}   e^{-t_j^2/T^2} \ll
    \frac{(2r)!}{2^r  r!} \left( \sum_{p\leq x} \frac{|a(p)|^2}{p} \right)^{r}
    +   T^{-1/3}.
  \]
\end{lemma}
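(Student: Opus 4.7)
The plan is to carry out a standard Soundararajan-style moment computation \cite{soundararajan2009moments,HuangLester,HHL}. First, I would expand the $2r$-th power,
\[
\left(\sum_{p\leq x}\frac{a(p)\lambda_j(p)}{p^{1/2}}\right)^{2r}=\sum_{p_1,\ldots,p_{2r}\leq x}\frac{a(p_1)\cdots a(p_{2r})}{\sqrt{p_1\cdots p_{2r}}}\,\lambda_j(p_1)\cdots\lambda_j(p_{2r}),
\]
and interchange with the sum over $j$. Iterating the Hecke multiplicativity $\lambda_j(a)\lambda_j(b)=\sum_{d\mid(a,b)}\lambda_j(ab/d^2)$, I would rewrite $\lambda_j(p_1)\cdots\lambda_j(p_{2r})$ as a linear combination, with $O_r(1)$ coefficients, of products $\lambda_j(m)\lambda_j(n)$ with $m\mid p_1\cdots p_r$, $n\mid p_{r+1}\cdots p_{2r}$ and both quotients perfect squares; in particular $m,n\leq x^r\leq T^{1/10}$.

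Next, I would apply Lemma \ref{lemma:KTF} term by term to obtain
\[
\frac{1}{T^2}\sum_{j}\frac{\lambda_j(m)\lambda_j(n)}{L(1,\sym^2 u_j)}e^{-t_j^2/T^2}\ll\delta_{m,n}+(mn)^{1/2}T^{-1+\varepsilon}.
\]
The off-diagonal contribution is bounded, using $|a(p)|\ll p^{1/2}$ (so that each outer coefficient is $O(1)$), by at most $x^{2r}\leq T^{1/5}$ tuples times $(mn)^{1/2}T^{-1+\varepsilon}\leq T^{-9/10+\varepsilon}$, yielding a total of $\ll T^{-7/10+\varepsilon}\ll T^{-1/3}$. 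The hypothesis $x\leq T^{1/(10r)}$ is precisely what secures this saving.

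Finally, the diagonal $\delta_{m,n}$ forces the Hecke expansion to produce $\lambda_j(1)$, which occurs exactly when the indices $\{1,\ldots,2r\}$ pair up into $r$ couples carrying equal primes. The crucial combinatorial input is the identity $\lambda_j(p)^2=\lambda_j(p^2)+1$: the $+1$ contributes to the diagonal, and iterating this through the full product yields a Wick-pairing sum. Summing over all $\frac{(2r)!}{2^r r!}=(2r-1)!!$ perfect matchings of $\{1,\ldots,2r\}$ and, for each matching, independently over the prime assigned to each pair (an over-count that only inflates the upper bound), gives the claimed main term $\frac{(2r)!}{2^r r!}\bigl(\sum_{p\leq x}|a(p)|^2/p\bigr)^r$, which is the $(2r)$-th moment of a real standard Gaussian. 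The main point of care is the combinatorial book-keeping of this Wick sum, but it is routine for an upper bound.
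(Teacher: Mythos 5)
Your proposal is correct and is essentially the paper's proof: expand the $2r$-th power, apply the Kuznetsov mean-value estimate of Lemma \ref{lemma:KTF} term by term, bound the diagonal by the $2r$-th real Gaussian moment $\frac{(2r)!}{2^r r!}\bigl(\sum_{p\le x}|a(p)|^2/p\bigr)^r$, and use $x\le T^{1/(10r)}$ to make the off-diagonal negligible. The bookkeeping differs only cosmetically: the paper first collapses $\prod_i\lambda_j(p_i)^{\alpha_i}$ into a single $\sum_{m\mid n}b(m,n)\lambda_j(m)$ and reads off the coefficient of $\lambda_j(1)$, namely $\prod_i\frac{(2\beta_i)!}{\beta_i!(\beta_i+1)!}$ (Catalan numbers), which it controls via the multinomial inequality $\binom{2r}{2\beta_1,\dots,2\beta_s}\prod_i\frac{(2\beta_i)!}{\beta_i!(\beta_i+1)!}\le\frac{(2r)!}{2^r r!}\binom{r}{\beta_1,\dots,\beta_s}$; your two-block split into $\lambda_j(m)\lambda_j(n)$ with the $\delta_{m,n}$ diagonal and a Wick-pairing over-count is equivalent, and the ``routine'' justification you defer is exactly the bound $C_\beta=\frac{(2\beta)!}{\beta!(\beta+1)!}\le(2\beta-1)!!$, i.e.\ $2^\beta\le(\beta+1)!$.
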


\begin{proof}
  Write
  \[
    \left( \sum_{p\leq x} \frac{a(p) \lambda_j(p)}{p^{1/2}} \right)^{2r}
    = \sum_{n\leq x^{2r}} \frac{a_{2r,x}(n)}{n^{1/2}} \prod_{n=\prod_{i=1}^{s} p_i^{\alpha_i}} \lambda_j(p_i)^{\alpha_i},
  \]
  where $a_{2r,x}(n)=0$ unless $n=\prod_{i=1}^{s} p_i^{\alpha_i}$ with $\sum_{i=1}^{s} \alpha_i=2r$ and distinct primes $p_i$ all below $x$, in which case we have $a_{2r,x}(n)=\binom{2r}{\alpha_1,\ldots,\alpha_s} \prod_{i=1}^{s} a(p_i)^{\alpha_i}$.

  By the Hecke relations and some computations, we know (see e.g. \cite[Lemma 7.1]{LL})
  \[
    \lambda_j(p)^{\alpha} = \sum_{l=0}^{\alpha/2} \frac{\alpha! (2l+1)}{(\alpha/2-l)! (\alpha/2+l+1)!} \lambda_j(p^{2l}),
  \]
  if $2\mid \alpha$; and
  \[
    \lambda_j(p)^{\alpha} =\sum_{l=0}^{(\alpha-1)/2} \frac{\alpha! (2l+2)}{((\alpha-1)/2-l)! ((\alpha+3)/2+l)!} \lambda_j(p^{2l+1}),
  \]
  if $2\nmid \alpha$.
  So we can write  $\lambda_j(p)^{\alpha}=\sum_{\beta=0}^{\alpha} b_{\alpha,\beta} \lambda_{j}(p^{\beta})$, where
  \[
    b_{\alpha,\beta} = \left\{
    \begin{array}{ll}
      \frac{\alpha! (\beta+1)}{(\frac{\alpha-\beta}{2})! (\frac{\alpha+\beta}{2}+1)!}, & \textrm{if } \alpha \equiv \beta \ \mod 2,\ 0\leq \beta\leq \alpha, \\
      0, & \textrm{otherwise.}
    \end{array}
    \right.
  \]
  Hence we obtain
  \[
    \left( \sum_{p\leq x} \frac{a_p \lambda_j(p)}{p^{1/2}} \right)^{2r}
    = \sum_{n\leq x^{2r}} \frac{a_{2r,x}(n)}{n^{1/2}} \sum_{m\mid n} b(m,n) \lambda_j(m),
  \]
  where $b(m,n)= \prod_{i=1}^{s} b_{\alpha,\beta}$ for $m=\prod_{i=1}^{s}p_i^{\beta_i}$.
  In particular, $b(1,n)=0$ unless $2\mid \alpha_i$ for all $i$,
  in which case we have $b(1,n)=\prod_{i=1}^{s} \frac{\alpha_i !}{(\alpha_i/2)! (\alpha_i/2+1)!}$.
  Note that we have
  \begin{equation}\label{eqn:b}
    0\leq b_{\alpha,\beta}\leq \binom{\alpha}{(\alpha-\beta)/2}\leq  2^{\alpha}, \quad
    0\leq b(m,n)\leq \prod_{i=1}^s 2^{\alpha_i}\leq n.
  \end{equation}

  Now we have
  \begin{multline*}
    S := \frac{1}{T^2} \sum_{j\geq1} \frac{1}{L(1,\sym^2 u_j)}  \left( \sum_{p\leq x} \frac{a_p \lambda_j(p)}{p^{1/2}} \right)^{2r}   e^{-t_j^2/T^2} \\
    =
    \sum_{n\leq x^{2r}} \frac{a_{2r,x}(n)}{n^{1/2}} \sum_{m\mid n} b(m,n)
    \frac{1}{T^2} \sum_{j\geq1} \frac{ \lambda_j(m)}{L(1,\sym^2 u_j)}  e^{-t_j^2/T^2}.
  \end{multline*}
  By Lemma \ref{lemma:KTF} we have
  \begin{align*}
    S & \ll \sum_{n\leq x^{2r}} \frac{|a_{2r,x}(n)|}{n^{1/2}} \sum_{m\mid n} b(m,n) (\delta_{1,m}+ m^{1/2} T^{-1+\varepsilon})
    \\
    & \ll \sum_{\substack{n\leq x^{2r}\\ 2\mid \alpha_i}} \frac{|a_{2r,x}(n)|}{n^{1/2}} \prod_{i=1}^{s} \frac{\alpha_i !}{(\alpha_i/2)! (\alpha_i/2+1)!} +
    \sum_{n\leq x^{2r}} \frac{|a_{2r,x}(n)|}{n^{1/2}} \sum_{m\mid n} b(m,n)  m^{1/2} T^{-1+\varepsilon} .
  \end{align*}
  By \eqref{eqn:b} and writing $n=m^2$ in the first summation, we have
  \begin{align*}
    S & \ll \sum_{\substack{m=\prod_{i=1}^{s} p_i^{\beta_i} \\ p_i\leq x,\ \sum_{i=1}^{s}\beta_i=r}}
    \frac{|a_{2r,x}(m^2)|}{m} \prod_{i=1}^{s} \frac{(2\beta_i) !}{(\beta_i)! (\beta_i+1)!} +
    \sum_{n\leq x^{2r}}  |a_{2r,x}(n)|  n^{1+\varepsilon} T^{-1+\varepsilon}
    \\
    & \ll \sum_{\substack{m=\prod_{i=1}^{s} p_i^{\beta_i} \\ p_i\leq x,\ \sum_{i=1}^{s}\beta_i=r}}
    \binom{2r}{2\beta_1,\ldots,2\beta_s} \prod_{i=1}^{s} \frac{|a(p_i)|^{2\beta_i}}{p_i^{\beta_i}} \prod_{i=1}^{s} \frac{(2\beta_i) !}{(\beta_i)! (\beta_i+1)!} \\
    & \hskip 120pt  +
    \sum_{n\leq x^{2r}} \binom{2r}{\alpha_1,\ldots,\alpha_s} \prod_{i=1}^{s} |a(p_i)|^{\alpha_i}  x^{3r} T^{-1+\varepsilon}
  \end{align*}
  Note that $\binom{2r}{2\beta_1,\ldots,2\beta_s} \prod_{i=1}^{s} \frac{(2\beta_i) !}{(\beta_i)! (\beta_i+1)!} \leq \frac{(2r)!}{\prod_{i=1}^{s}(\beta_i)! (\beta_i+1)!} \leq \frac{(2r)!}{2^r r!} \binom{r}{\beta_1,\ldots,\beta_s}$.
  Here we have used the fact  $\frac{1}{\prod_{i=1}^{s}(\beta_i+1)!} \leq \frac{1}{2^r} $ for any $\sum_{i=1}^{s} \beta_i = r$.
  Hence
  \begin{align*}
    S & \ll \frac{(2r)!}{2^r r!} \sum_{\substack{m=\prod_{i=1}^{s} p_i^{\beta_i} \\ p_i\leq x,\ \sum_{i=1}^{s}\beta_i=r}}
    \binom{r}{\beta_1,\ldots,\beta_s} \prod_{i=1}^{s} \frac{|a(p_i)|^{2\beta_i}}{p_i^{\beta_i}}
    +   x^{3r} T^{-1+\varepsilon} \left( \sum_{p\leq x} |a(p)|\right)^{2r}
    \\
    & \ll \frac{(2r)!}{2^r r!} \left( \sum_{p\leq x} \frac{|a(p)|^2}{p} \right)^{r}
    +   x^{6r} T^{-1+\varepsilon} .
  \end{align*}
  Here we have used the assumption $|a(p)|\ll p^{1/2}$. Since $x\leq T^{1/(10r)}$, this completes the proof of the lemma.
\end{proof}

\subsection{Proof of Proposition \ref{prop:u_j}}

Define
\[
  \mathcal{L}(u_j) = \mathcal{L}(u_j;f,g;l_1,l_2,l_3) := L(1/2,u_j)^{l_1} L(1/2,\sym^2 f\times u_j)^{l_2}L(1/2,\sym^2 g\times u_j)^{l_3}
\]
and
\[
  \mathcal{B}_T(V) := \sum_{\substack{j\geq1 \\ \log \mathcal{L}(u_j)>V}} \frac{1}{L(1,\sym^2 u_j)} \exp\left(-\frac{t_j^2}{T^2}\right).
\]
Clearly, we have
\begin{align*}
  \sum_{\substack{j\geq1}} \frac{\mathcal{L}(u_j)}{L(1,\sym^2 u_j)} \exp\left(-\frac{t_j^2}{T^2}\right)
  & =  \sum_{\substack{j\geq1}} \frac{1}{L(1,\sym^2 u_j)} \exp\left(-\frac{t_j^2}{T^2}\right)
  \int_{-\infty}^{\log \mathcal{L}(u_j)} e^V \dd V  \\
  & = \int_{-\infty}^{\infty} e^V \mathcal{B}_T(V) \dd V.
\end{align*}
Define
\[
  \mu(T) := (-1/2+\varepsilon)(l_1+l_2+l_3) \log\log T.
\]
Then we have
\begin{align}\label{eqn:sum2integral}
  \sum_{\substack{j\geq1}} \frac{\mathcal{L}(u_j)}{L(1,\sym^2 u_j)} \exp\left(-\frac{t_j^2}{T^2}\right)
   & = e^{\mu(T)} \int_{-\infty}^{\infty} e^V \mathcal{B}_T(V+\mu(T)) \dd V.
\end{align}
To prove Proposition \ref{prop:u_j}, we need to estimate $\mathcal{B}_T(V+\mu(T))$.
By Lemmas \ref{lemma:logL1} and \ref{lemma:logL2}, for any $T^{3/2} \leq k\leq T^{100}$ and $T^{3/2} \leq \ell\leq T^{100}$, we know that $\log \mathcal{L}(u_j) \leq C \log T/\log\log T$ for some $C=C(l_1,l_2,l_3)>1$.
Note that we have the trivial bound $\mathcal{B}_T(V) \ll T^2$. So
\[
  e^{\mu(T)} \int_{-\infty}^{(\log\log T)^{2/3}} e^V \mathcal{B}_T(V+\mu(T)) \dd V
  \ll T^2 e^{\mu(T)+(\log\log T)^{2/3}}
  \ll  T^2 e^{\mu(T)} (\log T)^{\varepsilon}.
\]
Hence we only need to consider $(\log\log T)^{2/3} \leq V \leq C \log T/\log\log T$ in the integral in \eqref{eqn:sum2integral}.

By \eqref{eqn:Lambda1} and bounding the sum over $p^b\leq x$ with $b\geq3$, we obtain
\[
  \sum_{p^b \leq x} \frac{\Lambda_j(p^b) }{b p^{b(1/2+\frac{1}{\log x})}} \frac{\log \frac{x}{p^b}}{\log x}
  =
  \sum_{p \leq x} \frac{\lambda_j(p) }{p^{1/2+\frac{1}{\log x}}} \frac{\log \frac{x}{p}}{\log x}
  +
  \frac{1}{2} \sum_{p \leq \sqrt{x}} \frac{(\lambda_j(p^2)-1) }{p^{1+\frac{2}{\log x}}} \frac{\log \frac{x}{p^2}}{\log x}
  + O(1).
\]
By Lemma \ref{lemma:PNT1} and the prime number theorem, the second sum on the right hand side above equals
\[
  -\frac{1}{2} \log \log x + O(\log\log \log (t_j+x)).
\]
Similarly, by \eqref{eqn:Lambda2} and bounding the sum over $p^c\leq x$ with $c\geq3$, we obtain
\begin{multline*}
  \sum_{p^c \leq x} \frac{\Lambda_{\sym^2 f \times u_j}(p^c) }{c p^{c(1/2+\frac{1}{\log x})}} \frac{\log \frac{x}{p^c}}{\log x}
   \\
   =
  \sum_{p \leq x} \frac{\lambda_f(p^2)\lambda_j(p) }{p^{1/2+\frac{1}{\log x}}} \frac{\log \frac{x}{p}}{\log x}
  +
  \frac{1}{2} \sum_{p \leq \sqrt{x}} \frac{(\lambda_f(p^4)-\lambda_f(p^2)+1) (\lambda_j(p^2)-1)}{p^{1+\frac{2}{\log x}}} \frac{\log \frac{x}{p^2}}{\log x}
  + O(1).
\end{multline*}
By Lemmas \ref{lemma:PNT1} and \ref{lemma:PNT2} and the prime number theorem, the second sum on the right hand side above equals
\[
  -\frac{1}{2} \log \log x + O(\log\log \log (kt_j+x)).
\]
Hence for $T^{3/2} \leq k\leq T^{100}$, $T^{3/2} \leq \ell\leq T^{100}$, and $t_j\leq T^{1+\varepsilon}$, we have
\begin{multline}\label{eqn:logL-sums}
  \log \mathcal{L}(u_j)
  \leq
  \sum_{p \leq x} \frac{(l_1  + l_2 \lambda_f(p^2) + l_3 \lambda_g(p^2) ) \lambda_j(p)  }
  {p^{1/2+\frac{1}{\log x}}} \frac{\log \frac{x}{p}}{\log x}
  \\
  -\frac{1}{2} (l_1+l_2+l_3) \log \log x + O(\log\log\log T).
\end{multline}

For $2\leq y\leq x$, let
\[
  \mathcal{P}(u_j;x,y) := \sum_{p \leq y} \frac{(l_1  + l_2 \lambda_f(p^2) + l_3 \lambda_g(p^2) ) \lambda_j(p)  }
  {p^{1/2+\frac{1}{\log x}}} \frac{\log \frac{x}{p}}{\log x}
\]
and
\[
  \mathcal{A}_T(V;x) := \sum_{\substack{j\geq1 \\ \mathcal{P}(u_j;x,x)>V}} \frac{1}{L(1,\sym^2 u_j)} \exp\left(-\frac{t_j^2}{T^2}\right).
\]
Also, define
\[
  \sigma(T)^2 = (l_1^2+l_2^2+l_3^2) \log\log T.
\]
In order to estimate $\mathcal{B}_T(V)$, we will first prove bounds for $\mathcal{A}_T(V;x)$ as follows.

\begin{lemma}\label{lemma:A<<}
  Let $C\geq1$ be fixed and $\epsilon>0$ be sufficiently small. For $(\log\log T)^{2/3}\leq V \leq C\log T/\log\log T$, we have
  \[
    \mathcal{A}_T(V;T^{\frac{1}{\epsilon V}}) \ll T^2 \left( \exp\Big(-\frac{V^2}{2\sigma(T)^2}(1-2\epsilon)\Big) + \exp\Big(-\frac{\epsilon}{11} V\log V \Big) \right) + T^{5/3}.
  \]
\end{lemma}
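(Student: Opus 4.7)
My plan is to follow Soundararajan's moment method adapted to this fractional-moment setting. The starting point is Markov's inequality at a high even moment: for any positive integer $r$, the condition $\mathcal{P}(u_j;x,x)>V$ forces $\mathcal{P}(u_j;x,x)^{2r}>V^{2r}$, so
$$\mathcal{A}_T(V;x)\leq V^{-2r}\sum_{j\geq 1}\frac{\mathcal{P}(u_j;x,x)^{2r}}{L(1,\sym^2 u_j)}\exp(-t_j^2/T^2).$$
I then apply Lemma \ref{lemma:average} with the coefficients
$$a(p):=\bigl(l_1+l_2\lambda_f(p^2)+l_3\lambda_g(p^2)\bigr)\,p^{-1/\log x}\,\frac{\log(x/p)}{\log x},$$
which are $O(1)$ by Deligne's bound. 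With $x=T^{1/(\epsilon V)}$, the hypothesis $x\leq T^{1/(10r)}$ translates into the admissibility range $r\leq \epsilon V/10$, yielding
$$\mathcal{A}_T(V;x)\ll T^2 V^{-2r}\,\frac{(2r)!}{2^r r!}\,\Sigma(x)^r+T^{5/3},\qquad \Sigma(x):=\sum_{p\leq x}|a(p)|^2/p.$$

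The next step is to control the variance-like quantity $\Sigma(x)$. Expanding the square produces three diagonal contributions $l_1^2\sum 1/p$, $l_2^2\sum \lambda_f(p^2)^2/p$, $l_3^2\sum \lambda_g(p^2)^2/p$, and three off-diagonal contributions involving $\sum \lambda_f(p^2)/p$, $\sum \lambda_g(p^2)/p$, and $\sum \lambda_f(p^2)\lambda_g(p^2)/p$. Inserting the Hecke relation $\lambda_f(p^2)^2=1+\lambda_f(p^2)+\lambda_f(p^4)$ (and its analog for $g$) together with Lemma \ref{lemma:PNT2} reduces each diagonal to $\sum_{p\leq x}1/p+O(\log\log\log T)$, and Lemmas \ref{lemma:PNT1}, \ref{lemma:PNT2} bound each cross term by $O(\log\log\log T)$. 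In the Gaussian regime $V\leq \epsilon\sigma(T)^2/5$ one has $\log\log x=\log\log T(1+O(\epsilon))$, so Mertens' theorem gives $\Sigma(x)\leq \sigma(T)^2(1+O(\epsilon))$; for larger $V$ the cruder bound $\Sigma(x)\leq \sigma(T)^2(1+o(1))$ suffices.

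Combining Stirling's estimate $(2r)!/(2^r r!)\leq C_0(2r/e)^r$ with the above reduces the bound to optimizing $(2r\sigma(T)^2(1+O(\epsilon))/(eV^2))^r$ over admissible $r$. When the Gaussian optimum $r^{\ast}:=\lfloor V^2/(2\sigma(T)^2)\rfloor$ is admissible, taking $r=r^{\ast}$ gives the first tail $\exp(-V^2/(2\sigma(T)^2)(1-2\epsilon))$, the factor $(1-2\epsilon)$ serving as the safety margin that absorbs the variance error, Stirling's constant, and the rounding of $r^{\ast}$. Otherwise one takes the maximum admissible $r=\lfloor\epsilon V/10\rfloor$; the resulting exponent $(\epsilon V/10)\log(\epsilon\sigma(T)^2(1+O(\epsilon))/(5eV))$ is handled using the hypothesis $V\geq(\log\log T)^{2/3}$ and $\sigma(T)^2\ll \log\log T$, the slack between the denominators $10$ and $11$ being precisely what absorbs $\log\sigma(T)^2=\log\log\log T+O(1)$ and yields $\exp(-\epsilon V\log V/11)$.

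The main obstacle I anticipate is the quantitative bookkeeping at the transition between the two regimes: the cumulative slack coming from the variance computation, the constraint $r\leq\epsilon V/10$, Stirling's formula, and the rounding of $r^{\ast}$ must collectively fit inside the stated $(1-2\epsilon)$ factor and $11$-denominator margin across the entire range $(\log\log T)^{2/3}\leq V\leq C\log T/\log\log T$. No conceptually new ingredient is needed beyond what is already present in \cite{soundararajan2009moments} and its refinements in \cite{HuangLester, HHL}; the task is to verify that the losses at each step stay well below $\epsilon$.
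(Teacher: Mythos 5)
There is a genuine gap: you have omitted Soundararajan's key technical device, the splitting of the prime sum $\mathcal{P}(u_j;x,x)$ into a \emph{long} part $\mathcal{P}(u_j;x,z)$ over primes $p\leq z:=x^{1/\log\log T}$ and a \emph{short} part $\mathcal{P}(u_j;x,x)-\mathcal{P}(u_j;x,z)$ over primes in $(z,x]$, together with the dichotomy ``$\mathcal{P}(u_j;x,z)>(1-\epsilon)V$ or $\mathcal{P}(u_j;x,x)-\mathcal{P}(u_j;x,z)>\epsilon V$.'' Applying Lemma~\ref{lemma:average} directly to the full sum over $p\leq x$ forces the admissibility constraint $x\leq T^{1/(10r)}$, i.e.\ $r\leq\epsilon V/10$; applying it to the sum over $p\leq z$ instead permits $r\leq\epsilon V\log\log T/10$, a gain of a factor $\log\log T$. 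This gain is not a bookkeeping nicety: with your constraint, the Gaussian optimum $r^\ast\approx V^2/(2\sigma(T)^2)$ is admissible only for $V\lesssim\epsilon\sigma(T)^2$, whereas the paper needs it for $V$ up to $\asymp\sigma(T)^2\log\log T$.

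The range $\epsilon\sigma(T)^2/5<V\lesssim\sigma(T)^2\log\log T$ that you are forced to handle with the capped choice $r=\lfloor\epsilon V/10\rfloor$ is precisely where the proof of Proposition~\ref{prop:u_j} needs the Gaussian tail, because the integral $\int e^V\mathcal{A}_T(V;x)\,\dd V$ is dominated by $V\approx\sigma(T)^2$. There your bound is
\[
\exp\!\Big(\frac{\epsilon V}{10}\log\frac{\epsilon\sigma(T)^2}{5eV}\Big),
\]
which at $V=\sigma(T)^2$ equals $\exp(-c_\epsilon\sigma(T)^2)$ with $c_\epsilon=\frac{\epsilon}{10}\log(5e/\epsilon)\to 0$ as $\epsilon\to 0$; this is strictly weaker than the claimed Gaussian term $\exp(-\sigma(T)^2(1-2\epsilon)/2)$, and also weaker than the claimed $\exp(-\epsilon V\log V/11)\approx\exp(-\tfrac{\epsilon}{11}\sigma(T)^2\log\log\log T)$ once $T$ is large. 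The ``slack between $10$ and $11$'' in your final step absorbs an $O(1)$ constant, not the divergent factor $\log\sigma(T)^2=\log\log\log T+O(1)$ that actually appears when one tries to pass from your exponent to $-\epsilon V\log V/11$. In short, without the $z$-split the bound you obtain does not imply the lemma in the critical middle range of $V$, and consequently would not yield Proposition~\ref{prop:u_j}. The fix is to insert the two-regime dichotomy and treat the $(z,x]$ tail by a separate moment computation with $r=\lfloor\epsilon V/10\rfloor$, as in the paper.
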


\begin{proof}
  Let $x=T^{\frac{1}{\epsilon V}}$ and $z=x^{1/\log\log T}$. Also, let $V_1=(1-\epsilon) V$ and $V_2=\epsilon V$.
  If $\mathcal{P}(u_j;x,x)>V$, then
  \[
    \textrm{ i) } \ \mathcal{P}(u_j;x,z)> V_1 \qquad
    \textrm{or} \qquad
    \textrm{ ii) } \ \mathcal{P}(u_j;x,x)-\mathcal{P}(u_j;x,z)> V_2.
  \]

  We first deal with the case i).
  By Lemma \ref{lemma:average} we see that for any natural number $r\leq \frac{\epsilon V}{10}\log\log T$, we have
  \begin{align*}
    \sum_{\substack{j\geq1 \\ \mathcal{P}(u_j;x,z)>V_1}} \frac{1}{L(1,\sym^2 u_j)} \exp\left(-\frac{t_j^2}{T^2}\right)
    & < \frac{1}{V_1^{2r}} \sum_{j\geq1} \frac{1}{L(1,\sym^2 u_j)} \exp\left(-\frac{t_j^2}{T^2}\right)  \mathcal{P}(u_j;x,z)^{2r}
    \\
    & \ll \frac{T^2}{V_1^{2r}} \left( \frac{(2r)!}{2^r  r!} \left( \sum_{p\leq z} \frac{|a(p)|^2}{p} \right)^{r}
    +   T^{-1/3}\right),
  \end{align*}
  with $a(p)=\frac{(l_1  + l_2 \lambda_f(p^2) + l_3 \lambda_g(p^2) )}
  {p^{\frac{1}{\log x}}} \frac{\log \frac{x}{p}}{\log x}$.
  By Lemma \ref{lemma:PNT2} and the prime number theorem, we know
  \[
    \sum_{p\leq z} \frac{(l_1  + l_2 \lambda_f(p^2) + l_3 \lambda_g(p^2) )^2}{p}
    = (l_1^2 + l_2^2+l_3^2) \log \log z + O(\log\log\log (k+z)).
  \]
  So we have
  \begin{multline*}
    \sum_{\substack{j\geq1 \\ \mathcal{P}(u_j;x,z)>V_1}} \frac{1}{L(1,\sym^2 u_j)} \exp\left(-\frac{t_j^2}{T^2}\right)
    \\
     \ll \frac{T^2}{V_1^{2r}} \left( \frac{(2r)!}{2^r r!} \big( (l_1^2 + l_2^2+l_3^2) \log \log z + O(\log\log\log T) \big)^{r}
    +   T^{-1/3}\right) \\
    \ll \frac{T^2}{V_1^{2r}}  \frac{(2r)!}{2^r r!} \big( (1+o(1) \sigma(T)^2 )\big)^{r}
    +   T^{5/3}.
  \end{multline*}
  By Stirling's formula, we get
  \begin{equation*}
    \sum_{\substack{j\geq1 \\ \mathcal{P}(u_j;x,z)>V_1}} \frac{1}{L(1,\sym^2 u_j)} \exp\left(-\frac{t_j^2}{T^2}\right)
    \ll T^2 \left( \frac{2r(1+o(1)) \sigma(T)^2 }{e V_1^2 } \right)^{r}
    +   T^{5/3}.
  \end{equation*}
  Take
  \[
    r = \left\{ \begin{array}{ll}
          \lfloor \frac{V_1^2}{2\sigma(T)^2} \rfloor, & \textrm{if } (\log\log T)^{2/3} \leq V \leq \frac{\epsilon}{10} \sigma(T)^2 \log\log T, \\
          \lfloor \frac{\epsilon V}{10} \rfloor, & \textrm{if } \frac{\epsilon}{10} \sigma(T)^2 \log\log T < V \leq C\log T/\log\log T.
        \end{array}\right.
  \]
  Then we have
  \begin{multline*}
    \sum_{\substack{j\geq1 \\ \mathcal{P}(u_j;x,z)>V_1}} \frac{1}{L(1,\sym^2 u_j)} \exp\left(-\frac{t_j^2}{T^2}\right)
    \\
    \ll T^2 \left( \exp\Big(-\frac{V^2}{2\sigma(T)^2}(1-2\epsilon)\Big) + \exp\Big(-\frac{\epsilon}{11} V\log V \Big) \right)
    +   T^{5/3}.
  \end{multline*}

  Now we treat the case ii).
  As before, by Lemma \ref{lemma:average} we see that for  $r= \lfloor \frac{\epsilon V}{10}\rfloor$, we have
  \begin{align*}
    \sum_{\substack{j\geq1 \\ \mathcal{P}(u_j;x,x)- \mathcal{P}(u_j;x,z)>V_2}}
    & \frac{1}{L(1,\sym^2 u_j)} \exp\left(-\frac{t_j^2}{T^2}\right) \\
    & < \frac{1}{V_2^{2r}} \sum_{j\geq1} \frac{(\mathcal{P}(u_j;x,x)- \mathcal{P}(u_j;x,z))^{2r}}{L(1,\sym^2 u_j)} \exp\left(-\frac{t_j^2}{T^2}\right)
    \\
    & \ll \frac{T^2}{V_2^{2r}} \left( \frac{(2r)!}{2^r  r!} \left( \sum_{z< p\leq x} \frac{|a(p)|^2}{p} \right)^{r}
    +   T^{-1/3}\right).
  \end{align*}
  By Deligne's bound \cite{Deligne}, we have  $|a(p)| \leq l_1+3l_2+3l_3$. By the prime number theorem we get
  \begin{align*}
    & \sum_{\substack{j\geq1 \\ \mathcal{P}(u_j;x,x)- \mathcal{P}(u_j;x,z)>V_2}}
    \frac{1}{L(1,\sym^2 u_j)} \exp\left(-\frac{t_j^2}{T^2}\right) \\
    & \hskip 50pt \ll \frac{T^2}{V_2^{2r}} \left( \frac{(2r)!}{2^r  r!} \left( (l_1+3l_2+3l_3)^2 (\log\log x-\log \log z+O(1)) \right)^{r}
    +   T^{-1/3}\right) \\
    & \hskip 50pt \ll T^2   \left( \frac{2 r  (l_1+3l_2+3l_3)^2 (\log\log\log T + O(1))}{e \epsilon^2 V^2} \right)^{r}
    +   T^{5/3}
    \\
    & \hskip 50pt \ll T^2  \exp\left( -\frac{\epsilon}{11} V\log V \right)
    +   T^{5/3} .
  \end{align*}
  This completes the proof of the lemma.
\end{proof}

\begin{proof}[Proof of Proposition \ref{prop:u_j}]
  Let $x=T^{1/\epsilon V}$. For $(\log\log T)^{2/3} \leq V \leq (\log\log T)^{4}$, we have
  \[
    -\frac{1}{2} (l_1+l_2+l_3) \log\log x + O(\log\log\log T) \leq \mu(T).
  \]
  It follows from \eqref{eqn:logL-sums} that
  \[
    \mathcal{B}_T(V+\mu(T)) \leq \mathcal{A}_T(V(1-2\varepsilon);x)
  \]
  when $(\log\log T)^{2/3} \leq V \leq (\log\log T)^{4}$.
  For $V> (\log\log T)^4$, we have $V+\mu(T)= V(1+o(1))$, so the above inequality still holds.

  Now by \eqref{eqn:sum2integral} and Lemma \ref{lemma:A<<}, we have
  \begin{multline*}
    \sum_{\substack{j\geq1}} \frac{\mathcal{L}(u_j)}{L(1,\sym^2 u_j)} \exp\left(-\frac{t_j^2}{T^2}\right)
     \\
      \leq  e^{\mu(T)} \int_{(\log\log T)^{2/3}}^{C\frac{\log T}{\log\log T}} e^V \mathcal{A}_T(V(1-2\varepsilon);x) \dd V
     + O\left(T^2 e^{\mu(T)} (\log T)^{\varepsilon}\right)
     \\
      \ll
      T^2 e^{\mu(T)} \int_{(\log\log T)^{2/3}}^{C\frac{\log T}{\log\log T}} e^V
       \left( \exp\Big(-\frac{V^2}{2\sigma(T)^2}(1-\varepsilon)\Big) + \exp\Big(-\frac{\varepsilon}{11} V\log V \Big) \right)  \dd V
      \\
       + T^2 e^{\mu(T)} (\log T)^{\varepsilon}.
  \end{multline*}
  Note that $\int_{\mathbb{R}} \exp(-x^2/(2\sigma^2)+x) \dd x = \sqrt{2\pi} \sigma \exp(\sigma^2/2)$.
  Hence we have
  \begin{align*}
    \sum_{\substack{j\geq1}} \frac{\mathcal{L}(u_j)}{L(1,\sym^2 u_j)} \exp\left(-\frac{t_j^2}{T^2}\right)
     &  \ll
      T^2 e^{\mu(T)+\sigma(T)^2/2} (\log T)^\varepsilon
       + T^2 e^{\mu(T)} (\log T)^{\varepsilon} \\
     & \ll T^2 (\log T)^{\sum_{i=1}^{3} \frac{l_i(l_i-1)}{2}+\varepsilon}.
  \end{align*}
  This completes the proof of Proposition \ref{prop:u_j}.
\end{proof}

\section{Higher decorrelation: Proof of Theorem \ref{thm:220}} \label{sec:higher-decorrelation}

In this section, we give a sketch proof of Theorem \ref{thm:220}. By Parseval's identity, we have
\begin{equation}\label{Selberg spectral decomposition2}
  \langle f^2 , g^2\rangle
  =\sum_{h\in H_{2k}} \langle f^2 , h\rangle \langle h, g^2 \rangle .
\end{equation}
Watson's formula \cite{Watson} gives
\begin{equation}\label{Watson's formula}
  |\langle f^2 , h\rangle|^2
  \ll \frac{\pi^3}{2(2k-1)} \frac{L(1/2, h)L(1/2, \sym^2f \times h)}
    {L(1, \sym^2 f)^2 L(1, \sym^2h)}.
\end{equation}
Note that these $L$-values are nonnegative \cite{KohnenZagier,Lapid}. So we have
\begin{equation}
  \langle f^2 , g^2\rangle  \ll \frac{1}{k}
  \sum_{h\in H_{2k}}
  \frac{L(1/2, h) L(1/2, \sym^2f \times h)^{1/2} L(1/2, \sym^2g \times h)^{1/2}}
    {L(1, \sym^2 f) L(1, \sym^2 g) L(1, \sym^2h)}.
\end{equation}
We will use  the following proposition on the fractional moment of $L$-functions.
\begin{proposition}\label{prop:h}
  Let $f,g\in H_k$ with $\langle f,g\rangle=0$. For $l,l_1,l_2\geq0$, we have that
  \begin{align*}
   \frac{1}{k}
  \sum_{h\in H_{2k}} L(1/2, h)^l L(1/2, \sym^2f \times h)^{l_1}
  & L(1/2, \sym^2g \times h)^{l_2} \\
   & \ll  (\log k)^{\frac{l(l-1)}{2}+\frac{l_1(l_1-1)}{2} +\frac{l_2(l_2-1)}{2}+\varepsilon},
\end{align*}
where the implied constant may depend on $l,l_1,l_2$ and $\varepsilon$.
\end{proposition}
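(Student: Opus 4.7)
The plan is to mirror, step by step, the proof of Proposition \ref{prop:u_j} given in Section \ref{sec:fractional-moment}, with the Kuznetsov trace formula replaced by the Petersson trace formula for $H_{2k}$, and with GRC for Maass forms replaced by Deligne's theorem for holomorphic forms.

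First I would establish a holomorphic analog of Lemma \ref{lemma:KTF}. Starting from Petersson's formula
\[
  \frac{\Gamma(2k-1)}{(4\pi)^{2k-1}} \sum_{h \in H_{2k}} \frac{\lambda_h(m)\lambda_h(n)}{\|h\|^2} = \delta_{m,n} + 2\pi (-1)^k \sum_{c\geq 1} \frac{S(m,n,c)}{c} J_{2k-1}\!\left(\frac{4\pi\sqrt{mn}}{c}\right),
\]
together with $J_{2k-1}(x) \ll (ex/(4k))^{2k-1}$ for $x \ll k$, the off-diagonal contribution is negligible whenever $mn \ll k^{2-\delta}$. Since $\|h\|^2$ is a constant multiple of $\Gamma(2k-1)(4\pi)^{-(2k-1)} L(1,\sym^2 h)$, this gives
\[
  \sum_{h\in H_{2k}} \frac{\lambda_h(m)\lambda_h(n)}{L(1,\sym^2 h)} \ll k\,\delta_{m,n} + k^{-A}(mn)^{1/2}
\]
for any fixed $A > 0$, valid in the range $mn \ll k^{2-\delta}$. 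Combined with the Hecke-relation identity $\lambda_h(p)^\alpha = \sum_\beta b_{\alpha,\beta} \lambda_h(p^\beta)$ used in Lemma \ref{lemma:average}, this yields the desired $(2r)$-th moment bound for short Dirichlet polynomials in $\lambda_h(p)$ over $p \leq x$, provided $x^{2r} \leq k^{2-\delta}$.

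Next, under GRH I would apply Chandee's bound \cite[Theorem 2.1]{Chandee} to $L(s,h)$, $L(s,\sym^2 f \times h)$, and $L(s,\sym^2 g \times h)$ to bound $\log \mathcal{L}(h)$ by a short Dirichlet polynomial plus $O(\log k/\log x)$, where $\mathcal{L}(h) := L(1/2,h)^l L(1/2,\sym^2 f\times h)^{l_1} L(1/2,\sym^2 g \times h)^{l_2}$. Deligne's bound renders the higher prime-power contributions ($c \geq 2$) negligible; the essential prime-sum identities are exactly those in Lemma \ref{lemma:PNT2}, which require $\langle f,g\rangle=0$ so that $L(s, f \times g)$ is entire. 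Defining the distribution function
\[
  \mathcal{B}_k(V) := \sum_{\substack{h \in H_{2k} \\ \log \mathcal{L}(h) > V}} \frac{1}{L(1,\sym^2 h)},
\]
I then repeat the proof of Lemma \ref{lemma:A<<} verbatim: split at $z = x^{1/\log\log k}$, estimate the ``typical'' range via the $(2r)$-th moment with $r \asymp V^2/\sigma(k)^2$ or $r \asymp V$, and the ``tail'' range via Deligne's bound on the coefficients. This produces Gaussian-type decay with variance $\sigma(k)^2 = (l^2+l_1^2+l_2^2)\log\log k$ and shift $\mu(k) = -\tfrac{1}{2}(l+l_1+l_2)\log\log k$.

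Finally, integrating $\sum_{h} L(1,\sym^2 h)^{-1}\mathcal{L}(h) = \int e^V \mathcal{B}_k(V)\,dV$ yields the bound for the harmonically weighted sum. Stripping the harmonic weight $L(1,\sym^2 h)^{-1}$ costs at most $(\log k)^{\varepsilon}$ under GRH since $(\log\log k)^{-1} \ll L(1,\sym^2 h) \ll (\log\log k)^3$, giving Proposition \ref{prop:h} after division by $k$. The main obstacle is the tighter Petersson support condition $mn \ll k^{2-\delta}$ (versus the Kuznetsov condition $mn \leq T^2$ exploited in Lemma \ref{lemma:average}), but this remains compatible with Soundararajan's typical choice $r \ll \log k/\log\log k$; the more delicate bookkeeping concerns the passage between the harmonic and natural averages, where one must confirm that the $(\log k)^\varepsilon$ losses are absorbed into the $\varepsilon$ in the exponent and do not perturb the main exponent $\sum_i l_i(l_i-1)/2$.
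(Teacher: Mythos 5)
Your proposal is correct and mirrors the paper's approach: the paper itself gives only a sketch, stating that the proof follows that of Proposition \ref{prop:u_j} with the Kuznetsov formula replaced by Petersson's formula, citing \cite[Lemma 2.1]{RudnickSoundararajan} (diagonal term plus $O(e^{-k})$ error for $mn\leq k^2/10^4$). Your blind reconstruction — the $J$-Bessel decay estimate for the off-diagonal, the GRC-to-Deligne substitution, the computation $\sigma(k)^2=(l^2+l_1^2+l_2^2)\log\log k$ and $\mu(k)=-\tfrac12(l+l_1+l_2)\log\log k$ yielding the exponent $\sum_i l_i(l_i-1)/2$, and the $(\log k)^{\varepsilon}$-loss in passing between harmonic and natural averages — matches what the paper intends.
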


The proof of Proposition \ref{prop:h} is similar to Proposition \ref{prop:u_j}, so we do not give the details.
We only mention that in the proof one can use the following lemma on the Petersson's formula to replace Lemma \ref{lemma:KTF}.
\begin{lemma}
  Let $k$ be a large even integer. For two natural numbers $m$ and $n$ with $mn\leq k^2/10^4$, we have
  \[
    \frac{2\pi^2}{k-1}\sum_{f\in H_k} \frac{\lambda_f(m)\lambda_f(n)}{L(1,\sym^2 f)} = \delta_{m,n} + O(e^{-k}).
  \]
\end{lemma}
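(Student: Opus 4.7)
The plan is to derive the lemma directly from the Petersson trace formula and then exploit the fact that the Bessel function $J_{k-1}(x)$ decays extremely rapidly when its argument $x$ is small compared to $k$. Explicitly, the Petersson formula in the $L^2$-normalized setup $\lambda_f(n)=a_f(n)/n^{(k-1)/2}$ reads
\[
  \frac{2\pi^2}{k-1}\sum_{f\in H_k}\frac{\lambda_f(m)\lambda_f(n)}{L(1,\sym^2 f)}
  = \delta_{m,n} + 2\pi\, i^{-k}\sum_{c\geq 1}\frac{S(m,n,c)}{c}\, J_{k-1}\!\left(\frac{4\pi\sqrt{mn}}{c}\right),
\]
where $i^{-k}=\pm1$ since $k$ is even. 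So the only task is to show that the Kloosterman sum is $O(e^{-k})$ under the hypothesis $mn\leq k^2/10^4$.

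For this I would combine two standard ingredients. First, Weil's bound $|S(m,n,c)|\leq \tau(c)c^{1/2}(m,n,c)^{1/2}$, as already recalled in Section~\ref{sec:preliminaries}. Second, the elementary power-series bound
\[
  |J_{\nu}(x)|\;\leq\; \frac{(x/2)^{\nu}}{\Gamma(\nu+1)}\exp\!\left(\frac{x^2}{4(\nu+1)}\right),
\]
valid for all $x>0$ and $\nu\geq 0$, which follows from the defining series $J_\nu(x)=\sum_{m\geq 0}\frac{(-1)^m}{m!\,\Gamma(m+\nu+1)}(x/2)^{2m+\nu}$ by a straightforward absolute-value majorization and comparison with the exponential series. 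Setting $\nu=k-1$ and $x=4\pi\sqrt{mn}/c$ and using Stirling's formula $\Gamma(k)\sim\sqrt{2\pi/k}\,(k/e)^k$, the factor $(x/2)^{k-1}/\Gamma(k)$ becomes
\[
  \ll\;\frac{1}{\sqrt{k}}\left(\frac{2\pi e\sqrt{mn}}{c(k-1)}\right)^{k-1}
  \;\ll\;\frac{1}{\sqrt{k}}\left(\frac{2\pi e}{100\,c}\right)^{k-1}
\]
since $\sqrt{mn}\leq k/100$. The exponential correction factor is bounded by $e^{\pi^2 mn/(c^2 k)}=O(1)$ in the relevant range. This already gives exponential decay in $k$ for each $c\geq 1$.

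Finally, summing over $c$ I would bound the whole off-diagonal term by
\[
  \sum_{c\geq 1}\frac{\tau(c)c^{1/2}(m,n,c)^{1/2}}{c}\cdot\frac{1}{\sqrt{k}}\left(\frac{2\pi e}{100\,c}\right)^{k-1},
\]
which is a convergent series whose leading $c=1$ term dictates the size, yielding $O((2\pi e/100)^{k-1})=O(e^{-k})$ with plenty of room to spare. (In fact one could afford $mn\leq (ck/e)^2$ for some explicit $c<1$, but the constant $10^4$ chosen in the statement is more than enough and keeps the arithmetic trivial.) I do not foresee any real obstacle: the only thing to check carefully is that the constant $2\pi e/100<1$, which is where the condition $mn\leq k^2/10^4$ is used, and that Weil's bound suffices for the convergence in $c$ (which it does since $\sum_c\tau(c)c^{-1/2}\cdot\alpha^{c-1}$ converges for any $\alpha<1$ — even crudely bounding $(m,n,c)^{1/2}\leq c^{1/2}$ leaves geometric decay in $c$ from the Bessel factor).
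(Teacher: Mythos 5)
Your approach is the same as that of the cited source: the paper simply refers to Rudnick--Soundararajan \cite[Lemma 2.1]{RudnickSoundararajan}, whose proof is exactly the Petersson formula plus the super-exponential decay of $J_{k-1}(x)$ in the regime $x\ll k$. So you have effectively reproduced the reference's argument from scratch, which is fine.

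One small inaccuracy worth flagging: the claim that the correction factor $\exp\!\bigl(x^2/(4(\nu+1))\bigr)$ is $O(1)$ ``in the relevant range'' is not literally true. With $\nu=k-1$ and $x=4\pi\sqrt{mn}/c$, this factor is $\exp\!\bigl(4\pi^2 mn/(kc^2)\bigr)$, and when $mn$ is of size $k^2/10^4$ and $c=1$ it is roughly $e^{ck}$ with $c\approx 4\pi^2/10^4\approx 0.004$ — so it grows exponentially in $k$, just with a very small constant. This does not damage the conclusion, because the main factor $(2\pi e/100)^{k-1}\approx e^{-1.77(k-1)}$ decays much faster, but the intermediate assertion should be corrected to ``the correction factor is at most $e^{ck}$ for some constant $c<1/200$, which is dominated by the geometric decay of the prefactor.'' With that fix, the estimate $O(e^{-k})$ holds exactly as you say, and indeed the sum over $c$ converges with room to spare since every factor $c^{-(k-1)}$ gives geometric decay.
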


\begin{proof}
  This is \cite[Lemma 2.1]{RudnickSoundararajan}.
\end{proof}

\begin{proof}[Proof of Theorem \ref{thm:220}]
  Under GRH, for $f\in H_k$ we have $(\log\log k)^{-1} \ll  L(1, \sym^2 f) \ll (\log\log k)^{3}$ (see \cite[Theorem 3]{LW}).
  By Proposition \ref{prop:h} with $l=1$ and $l_1=l_2=1/2$, we get
  \begin{equation*}
    \langle f^2 , g^2\rangle  \ll (\log k)^{-1/4+\varepsilon}.
  \end{equation*}
  This completes the proof of Theorem \ref{thm:220}.
\end{proof}

%
%

\section{First moment of the triple product $L$-functions}\label{sec:moment}

In this section, we prove Theorem \ref{thm:moment}.
Let $k,\ell$ be two even positive integers, and
let $f\in H_k$, $g\in H_\ell$, and $h\in H_{k+\ell}$ be three Hecke eigenforms.
By Watson's formula (see \cite[Theorem 3]{Watson}), we have
\[
  \frac{|\langle fg,h \rangle|^2}{\langle f,f\rangle\langle g,g\rangle\langle h,h\rangle}  =
  \frac{\Lambda(1/2,f\times g\times \bar h)}{4 \Lambda(1,\sym^2 f)\Lambda(1,\sym^2 g)\Lambda(1,\sym^2 \bar h)}
\]
Since $f,g,h$ have real coefficients, we may drop the complex conjugation bars. Here we use the notion of $L$-functions as in \cite[\S3]{Watson}.
The completed symmetric square $L$-function for $f$ is
\[
  \Lambda(s,\sym^2 f) = L_\infty(s,\sym^2 f)  L(s,\sym^2 f) ,
\]
where
\[
  L_\infty(s,\sym^2 f)  = \Gamma_\mathbb{R} (s+1) \Gamma_\mathbb{C}(s+k-1)
\]
with $\Gamma_\mathbb{R}(s)=\pi^{-s/2} \Gamma(s/2)$ and
$\Gamma_\mathbb{C}(s)=\Gamma_\mathbb{R}(s)\Gamma_\mathbb{R}(s+1) = 2(2\pi)^{-s} \Gamma(s)$;
the completed triple production $L$-function for $f, g, h$ is
\[
  \Lambda(s,f\times g\times h) = L_\infty(s,f\times g\times h)  L(s,f\times g\times h) ,
\]
where
\[
  L_\infty(s,f\times g\times h)  = \Gamma_\mathbb{C} (s+1/2) \Gamma_\mathbb{C}(s+k+\ell-3/2) \Gamma_\mathbb{C}(s+k-1/2)\Gamma_\mathbb{C}(s+\ell-1/2).
\]

By some local computations we obtain (see \cite[\S2.3]{BlomerKhanYoung2013distribution})
\begin{equation}\label{eqn:norm2moment}
  \frac{|\langle fg,h \rangle|^2}{\langle f,f\rangle\langle g,g\rangle\langle h,h\rangle}  =
  \frac{\pi^3}{2(k+\ell-1)} \frac{L(1/2,f\times g\times  h)} {L(1,\sym^2 f) L(1,\sym^2 g)L(1,\sym^2 h)}
\end{equation}
By Parseval's identity, we have
\begin{align*}
  \langle fg,fg\rangle & = \sum_{h\in H_{k+\ell}} |\langle fg,h\rangle|^2   \\
  & = \frac{\pi^3}{2(k+\ell-1)} \frac{\langle f,f\rangle\langle g,g\rangle} {L(1,\sym^2 f) L(1,\sym^2 g)}
  \sum_{h\in H_{k+\ell}}\frac{L(1/2,f\times g\times  h)} {L(1,\sym^2 h)}
\end{align*}
Hence we get
\begin{align*}
  \frac{2\pi^2 }{k+\ell-1} \sum_{h\in H_{k+\ell}}\frac{L(1/2,f\times g\times  h)} {L(1,\sym^2 h)}
  = \langle fg,fg\rangle \frac{4}{\pi} \frac{L(1,\sym^2 f) L(1,\sym^2 g)}{\langle f,f\rangle\langle g,g\rangle}.
\end{align*}
Under GRH and GRC, by Theorem \ref{thm:221}, for $\langle f,f\rangle=\langle g,g\rangle=\frac{\pi}{3}$ we have
\begin{align*}
  \frac{2\pi^2 }{k+\ell-1} \sum_{h\in H_{k+\ell}} & \frac{L(1/2,f\times g\times  h)} {L(1,\sym^2 h)} \\
  & = \frac{\pi}{3}(1+O((\log k+\log \ell)^{-1/4+\varepsilon})) \frac{4}{\pi} \frac{L(1,\sym^2 f) L(1,\sym^2 g)}{\pi^2/9} \\
  & = 2 \frac{L(1,\sym^2 f) L(1,\sym^2 g)}{\zeta(2)} (1+O((\log k+\log \ell)^{-1/4+\varepsilon})) .
\end{align*}
Here we have used the fact $\zeta(2)=\pi^2/6$.
This completes the proof of Theorem \ref{thm:moment}.

\appendix

\section{A heuristic argument of Theorems \ref{thm:moment}}

In this appendix, we give a heuristic argument to deduce Theorem \ref{thm:moment}, and hence Theorem \ref{thm:221} via \eqref{eqn:norm2moment}. We will follow \cite{CFKRS} and \cite[\S4]{BlomerKhanYoung2013distribution}.
The following calculations are purely formal. Mimicking the approximation functional equation, we write formally
\[
  L(1/2+\alpha,f\times g\times h)
  = \sum_{m,n} \frac{A_{f\times g}(n,m,1) \lambda_h(n)}{(m^2 n)^{1/2+\alpha}}
  + X_\alpha  \sum_{m,n} \frac{A_{f\times g}(n,m,1) \lambda_h(n)}{(m^2 n)^{1/2-\alpha}}
\]
for certain quantity $X_\alpha$ with $X_0=1$. Here $A_{f\times g}(n,m,\ell)$ is the Fourier coefficients of the Rankin--Selberg convolution $f\times g$. So
\begin{multline*}
  \frac{2\pi^2}{k+\ell-1} \sum_{h\in H_{k+\ell}} \frac{
    L(1/2,f\times g\times  h )}{L(1,\sym^2 h)}
    \\
    = \frac{2\pi^2}{k+\ell-1} \sum_{h\in H_{k+\ell}} \frac{
    1}{L(1,\sym^2 h)} \sum_{m,n} \frac{A_{f\times g}(n,m,1) \lambda_h(n)}{(m^2 n)^{1/2+\alpha}}  + \cdots,
\end{multline*}
where the dots indicate another similar term.
By the Petersson formula, this spectral sum can be expressed as a diagonal term plus a sum of Kloosterman sums.
The recipe of \cite{CFKRS} instructs us to retain only the diagonal terms. Thus we obtain
\[
  \frac{2\pi^2}{k+\ell-1} \sum_{h\in H_{k+\ell}} \frac{
    L(1/2,f\times g\times  h )}{L(1,\sym^2 h)}
    \sim  \sum_{m} \frac{A_{f\times g}(1,m,1)}{m^{1+2\alpha}}  + \sum_{m} \frac{A_{f\times g}(1,m,1)}{m^{1-2\alpha}} .
\]
By \cite[Theorem 1]{Kontorovich}, the exterior square $L$-function of $f\times g$ is given by
\[
  L(s,f\times g,\wedge^2) = \zeta(2s) \sum_{m\geq1} \frac{A_{f\times g}(1,m,1)}{m^s}.
\]
At this point we can set $\alpha=0$. So we get
\[
  \frac{2\pi^2}{k+\ell-1} \sum_{h\in H_{k+\ell}} \frac{
    L(1/2,f\times g\times  h )}{L(1,\sym^2 h)}
    \sim  2 \frac{L(1,f\times g,\wedge^2)}{\zeta(2)}.
\]
Let $\{\alpha_i(p)\}_{i=1}^4=\{\alpha_f(p)\alpha_g(p),\alpha_f(p)\alpha_g(p)^{-1},\alpha_f(p)^{-1}\alpha_g(p),\alpha_f(p)^{-1}\alpha_g(p)^{-1}\}$ be the Satake parameters of $f\times g$ at prime $p$. The Euler product of $L(s,f\times g,\wedge^2)$ is given by
\[
  L(s,f\times g,\wedge^2)
  = \prod_{p} \prod_{1\leq i<j\leq 4} \left( 1-\frac{\alpha_i(p)\alpha_j(p)}{p^s} \right)^{-1}.
\]
A simple computation shows that
\begin{align*}
  L(s,f\times g,\wedge^2)
  & = \prod_{p} \prod_{-1\leq i\leq 1} \left( 1-\frac{\alpha_f(p)^{2i}}{p^s} \right)^{-1}
  \prod_{-1\leq j\leq 1} \left( 1-\frac{\alpha_g(p)^{2j}}{p^s} \right)^{-1} \\
  & = L(s,\sym^2 f) L(s,\sym^2 g).
\end{align*}
Hence we obtain
\[
  \frac{2\pi^2}{k+\ell-1} \sum_{h\in H_{k+\ell}} \frac{
    L(1/2,f\times g\times  h )}{L(1,\sym^2 h)}
    \sim  2 \frac{L(1,\sym^2 f) L(1,\sym^2 g)}{\zeta(2)}.
\]

\section*{Acknowledgements}
The author would like to thank Shenghao Hua and Liangxun Li for letting him to write this paper at the beginning of our project, and for many discussions on \cite{HHL}. 
He wants to thank Steve Lester and Ze\'ev Rudnick  for helpful comments.

This material is based upon work supported
by the Swedish Research Council under grant no. 2021-06594
while the author was in residence at Institut Mittag-Leffler in Djursholm, Sweden
during the spring semester of 2024.


\end{document}